\newcommand{\ssi}{\subseteq_i}
\newcommand{\si}{\supseteq_i}
\newtheorem{oproblem}{Open Problem}
\newcounter{ctrclaim}[lemma]
\newcommand\displaycase[1]{{\em #1}}
\newcommand{\clm}[1]{\phantomsection\refstepcounter{ctrclaim}\noindent\displaycase{Claim \thectrclaim. }{#1}\\}
\newcounter{ctrcase}[lemma]
\newcommand{\thmcase}[1]{\medskip\phantomsection\refstepcounter{ctrcase}\noindent\displaycase{Case \thectrcase: }{\em #1}\\}
\title{Well-Quasi-Ordering versus Clique-Width:\\ New Results on Bigenic Classes\thanks{
Research supported by EPSRC (EP/K025090/1 and EP/L020408/1). An extended abstract of this paper appeared in the proceedings of IWOCA 2016~\cite{DLP16-conf}.}}
\author{Konrad K. Dabrowski\inst{1}, Vadim V. Lozin\inst{2} \and Dani\"el Paulusma\inst{1}}
\institute{School of Engineering and Computing Sciences, Durham University,\\ Science Laboratories, South Road, Durham DH1 3LE, United Kingdom
\texttt{\{konrad.dabrowski,daniel.paulusma\}@durham.ac.uk}
\and
Mathematics Institute, University of Warwick,\\
Coventry CV4 7AL, United Kingdom
\texttt{v.lozin@warwick.ac.uk}
}
\begin{document}
\maketitle

\begin{abstract}
Daligault, Rao and Thomass{\'e} asked whether a hereditary class of graphs well-quasi-ordered by the induced subgraph relation has bounded clique-width.
Lozin, Razgon and Zamaraev recently showed that this is not true for classes defined by infinitely many forbidden induced subgraphs.
However, in the case of finitely many forbidden induced subgraphs the question remains open and we conjecture that in this case the answer is positive. 
The conjecture is known to hold for classes of graphs defined by a single forbidden induced subgraph~$H$, 
as such graphs are well-quasi-ordered and are of bounded clique-width if and only if~$H$ is an induced subgraph of~$P_4$. 
For bigenic classes of graphs, i.e. ones defined by two forbidden induced subgraphs, there are several open cases in both classifications.
In the present paper we obtain a number of new results on well-quasi-orderability of bigenic classes, each of which supports the conjecture.
\end{abstract}

\section{Introduction}\label{s-intro}

Well-quasi-ordering is a highly desirable property and frequently discovered
concept in mathematics and theoretical computer science~\cite{FS01,Kruskal72}.
One of the most remarkable results in this area is Robertson and Seymour's proof of Wagner's conjecture,
which states that the set of all finite graphs is well-quasi-ordered by the minor relation~\cite{RS04-Wagner}.
One of the first steps towards this result was the proof of the fact that graph classes of bounded treewidth 
are well-quasi-ordered by the minor relation~\cite{RS90} (a graph
parameter~$\pi$ is said to be bounded for some graph class~${\cal G}$ if there exists a constant~$c$ such that 
$\pi(G)\leq c$ for all $G\in {\cal G}$).

The notion of clique-width generalizes that of treewidth in the sense that graph classes of bounded
treewidth have bounded clique-width, but not necessarily vice versa. The importance of both 
notions is due to the fact that many algorithmic problems that are NP-hard on general graphs
become polynomial-time solvable when restricted to graph classes of bounded treewidth or clique-width.
For treewidth this follows from the meta-theorem of Courcelle~\cite{Co92}, combined with a result of Bodlaender~\cite{Bo96}.
For clique-width this follows from combining results from several papers~\cite{CMR00,EGW01,KR03b,Ra07} with a result of Oum and Seymour~\cite{OS06}.

In the study of graph classes of bounded treewidth, we can restrict ourselves to minor-closed graph
classes, because from the definition of treewidth it immediately follows that the treewidth of a graph is never smaller than the treewidth of its minor.
This restriction, however, is not justified when we study graph classes of bounded clique-width, as the clique-width 
of a graph can be much smaller than the clique-width of its minor.
In particular, Courcelle~\cite{Co14} showed that if~${\cal G}$ is the class of graphs of clique-width~3 and~${\cal G}'$ is the class of graphs obtainable from graphs in~${\cal G}$ by applying one or more edge contraction operations, then~${\cal G}'$ has unbounded clique-width.
 On the other hand, the clique-width of a graph is 
never smaller than the clique-width of any of its induced subgraphs (see, for example,~\cite{CO00}). This allows us to restrict ourselves to classes of graphs
closed under taking induced subgraphs. Such graph classes are also known as {\em hereditary} classes. 

It is well-known (and not difficult to see) that a class of graphs is hereditary if and only if it can be characterized 
by a set of minimal forbidden induced subgraphs. Due to the minimality, the set~${\cal F}$ of forbidden induced subgraphs is always an antichain, 
that is, no graph in ${\cal F}$ is an induced subgraph of another graph in ${\cal F}$.
For some hereditary classes this set is finite, in which case we say that the class is
{\em finitely defined},
whereas for other hereditary classes
 (such as, for instance, bipartite graphs)
 the set of minimal forbidden induced subgraphs forms an infinite antichain.
The presence of these infinite antichains immediately shows that the induced subgraph relation is not a well-quasi-order.
In fact there even exist graph classes of bounded clique-width that are not well-quasi-ordered by the induced subgraph relation: take, for example, the class of cycles, which all have clique-width at most~4.
What about the inverse implication:
does well-quasi-ordering imply bounded clique-width?
This was stated as an open problem by Daligault, Rao and Thomass{\'e}~\cite{DRT10} and a negative answer to this question was recently given by
Lozin, Razgon and Zamaraev~\cite{LRZ15}. However, the latter authors disproved the conjecture by giving a hereditary class of graphs whose set of minimal forbidden induced subgraphs
is infinite. Hence, for finitely defined classes the question remains open
and we conjecture that in this case the answer is positive.

\begin{conjecture}\label{c-f}
If a finitely defined class of graphs ${\cal G}$ is well-quasi-ordered by the induced subgraph relation, then ${\cal G}$ has bounded clique-width.
\end{conjecture}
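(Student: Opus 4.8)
The plan is to prove the \emph{contrapositive}: I would assume that a finitely defined class ${\cal G}$, say ${\cal G}=\mathrm{Free}({\cal F})$ for a finite antichain ${\cal F}$, has unbounded clique-width, and from this produce an infinite antichain for the induced subgraph relation lying inside ${\cal G}$, thereby showing that ${\cal G}$ is not well-quasi-ordered. Two known facts constrain the shape of any such proof. First, bounded clique-width does not imply well-quasi-orderability (the cycles witness this), so the implication can only run in this one direction and I should not hope to characterise well-quasi-orderability by clique-width alone. Second, the finiteness of ${\cal F}$ must be used essentially, since the Lozin--Razgon--Zamaraev construction shows the statement fails without it. Concretely, finiteness gives a uniform bound $t$ on the number of vertices of every forbidden graph, and this is the leverage I would exploit throughout: any sufficiently large member of ${\cal G}$ has ``room'' in which no graph on at most $t$ vertices can be forced to appear, so Ramsey-type and density arguments can extract large, highly structured induced subgraphs that automatically remain in ${\cal G}$.

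The heart of the argument would be a structural step reducing unbounded clique-width to a canonical list of induced-subgraph obstructions. The model is the treewidth story, where unbounded treewidth forces an arbitrarily large grid minor; I would seek the induced-subgraph analogue, namely a collection ${\cal M}_1,\dots,{\cal M}_k$ of minimal hereditary classes of unbounded clique-width such that every finitely defined class of unbounded clique-width contains some ${\cal M}_i$ as a subclass. Granting such a reduction, the proof completes by a case analysis: for each canonical class ${\cal M}_i$ one exhibits an explicit infinite antichain $G^{(i)}_1,G^{(i)}_2,\dots$ inside ${\cal M}_i$ under the induced subgraph relation, and one then transfers this antichain into ${\cal G}$ using the size bound $t$ to guarantee membership.

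For the individual antichains I would argue as in the known minimal-class examples. A family such as subdivided walls, or bipartite permutation-type graphs, can be shown to be pairwise incomparable under induced subgraphs by identifying a monotone parameter (for instance the number of vertices of a fixed degree, or the structure of an associated sequence or permutation) that any induced embedding would have to preserve but cannot. The transfer step is where finiteness does its work: since each $G^{(i)}_n$ can be taken arbitrarily large and locally generic, a copy of any fixed $F\in{\cal F}$ inside $G^{(i)}_n$ would force a configuration that the construction explicitly avoids once $n\gg t$, so all but finitely many $G^{(i)}_n$ already lie in ${\cal G}$; discarding finitely many members of an antichain leaves an infinite antichain, as required.

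The main obstacle is precisely the structural reduction of the second paragraph, and I do not expect it to be routine --- indeed it is the crux of why the statement remains a conjecture. For the minor relation the grid theorem supplies a single universal obstruction, but clique-width admits no such clean dichotomy: the family of minimal hereditary classes of unbounded clique-width is not known to be finite, and only a handful of such classes have so far been identified. Thus the realistic target is not a proof of the full statement in one stroke, but a proof that the reduction holds whenever ${\cal F}$ is small, which is exactly why classes defined by two forbidden induced subgraphs are attacked first, in the hope that accumulating these cases will eventually reveal the complete list of induced-subgraph obstructions needed to settle the general conjecture.
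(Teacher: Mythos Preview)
The statement is a \emph{conjecture}, not a theorem, and the paper does not prove it. The paper states Conjecture~\ref{c-f} as an open problem, motivates it, and then spends the rest of the paper providing supporting evidence by settling specific bigenic cases; it never attempts a general proof. So there is no ``paper's own proof'' to compare your proposal against.

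Your proposal is also not a proof, and you essentially acknowledge this in your final paragraph. The entire argument hinges on the existence of a finite list ${\cal M}_1,\dots,{\cal M}_k$ of minimal hereditary classes of unbounded clique-width such that every finitely defined class of unbounded clique-width contains one of them. This reduction is not known to hold; indeed, whether there are only finitely many minimal hereditary classes of unbounded clique-width is itself a well-known open question, and there is no induced-subgraph analogue of the grid theorem available. Without that step your outline collapses: you have described a plausible research programme (which is in fact the programme the paper and its predecessors are pursuing case by case), but the ``key lemma'' you would need is exactly the conjecture in a different guise. The transfer step you sketch is also not automatic: membership of the antichain graphs $G^{(i)}_n$ in ${\cal G}$ cannot be deduced merely from the size bound $t$ on the forbidden graphs, since being large and ``locally generic'' does not by itself prevent a fixed small graph from appearing as an induced subgraph --- one needs the antichain to be tailored to the specific forbidden set ${\cal F}$, which again is precisely what the case-by-case work in the paper does.
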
 

\noindent
We emphasize that our motivation for verifying Conjecture~\ref{c-f} is not only mathematical but also algorithmic.
Should Conjecture~\ref{c-f} be true, then
for finitely defined classes of graphs  
the aforementioned algorithmic consequences of having bounded clique-width also hold for the property of being well-quasi-ordered by the induced subgraph relation. 

A class of graphs is {\it monogenic} or {\it $H$-free} if it is characterized by a single forbidden induced subgraph~$H$.
For monogenic classes, the conjecture is true. In this case, the two notions even coincide: a class of graphs defined by a single forbidden induced subgraph~$H$ is well-quasi-ordered if and only if it has bounded clique-width  if and only if~$H$ is an induced subgraph of~$P_4$
(see, for instance,~\cite{DP15,Da90,KL11}).

A class of graphs is {\it bigenic} or {\it $(H_1,H_2)$-free} if it is characterized by two
incomparable
forbidden induced subgraphs~$H_1$ and $H_2$.
The family of bigenic classes is more diverse than the family of monogenic classes. The questions of well-quasi-orderability and 
having bounded clique-width still need to be resolved.
Recently, considerable progress has been made towards answering the latter question for 
bigenic classes; see~\cite{DDP15} for the most recent survey, which shows that there are currently eight (non-equivalent) open cases.
With respect to well-quasi-orderability of bigenic classes, Korpelainen and Lozin~\cite{KL11} left all but 14 cases open.
Since then, Atminas and Lozin~\cite{AL15} proved that the class of $(K_3,P_6)$-free graphs is well-quasi-ordered by the induced subgraph relation and that the class of $(\overline{2P_1+P_2},P_6)$-free graphs is not, reducing the number of remaining open cases to~12. 
All available results for bigenic classes verify Conjecture~\ref{c-f}. Moreover, 
eight of the 12 open cases have bounded clique-width (and thus
immediately verify Conjecture~\ref{c-f}), leaving
four remaining open cases of bigenic classes for which we still need to verify Conjecture~\ref{c-f}.

\subsection*{Our Results}

Our first goal is to obtain more (bigenic) classes that are well-quasi-ordered by the induced subgraph relation 
and to support Conjecture~\ref{c-f} with further evidence. 
Our second and more general goal is to increase our general knowledge on well-quasi-ordered graph classes and the relation to the possible boundedness of their clique-width. 

\begin{sloppypar}
Towards our first goal we prove
in Section~\ref{s-yes} 
that the class of $(\overline{2P_1+P_2},\allowbreak P_2+\nobreak P_3)$-free graphs 
(which has bounded clique-width~\cite{DHP0}) 
is well-quasi-ordered by the induced subgraph relation. 
In Section~\ref{s-no} we
also determine, by giving infinite antichains, two bigenic classes that are not, namely the class of $(\overline{2P_1+P_2},P_2+\nobreak P_4)$-free graphs, which has unbounded clique-width~\cite{DHP0}, and
the class of $(\overline{P_1+P_4},P_1+\nobreak 2P_2)$-free graphs, for which boundedness of the clique-width is unknown
(see \figurename~\ref{fig:forbidden-graphs} for drawings of the five forbidden induced subgraphs).
Consequently, there are 
nine classes of $(H_1,H_2)$-free graphs for which we do not know whether they are well-quasi-ordered by the induced subgraph relation, and there are
two open cases left for the verification of Conjecture~\ref{c-f} for bigenic classes. 
We refer to Open Problems~\ref{o-wqo} and~\ref{o-con}, respectively, in Section~\ref{s-state} where we also give an exact description of the state-of-the-art for results on well-quasi-orderability and boundedness of clique-width for bigenic classes of graphs.
\end{sloppypar}

Towards our second goal, we aim to develop general techniques as opposed to tackling specific cases in an ad hoc fashion. 
Our starting point is a very fruitful technique used for determining (un)boundedness of the clique-width of a graph class~${\cal G}$. 
We transform a given graph from~${\cal G}$ via a number of elementary graph operations that do not modify the clique-width by ``too much'' into a graph from a class for which we do know whether or not its clique-width is bounded.

It is a natural question to research how the above modification technique can be used for well-quasi-orders. 
We do this in Section~\ref{s-permitted}. 
The permitted elementary graph operations that preserve (un)boundedness of the clique-width
are vertex deletion, subgraph complementation and bipartite complementation.
As we will explain in Section~\ref{s-permitted}, these three graph operations do not preserve well-quasi-orderability. We circumvent this
by investigating whether these three operations preserve boundedness of a graph parameter called uniformicity.
This parameter was introduced by Korpelainen and Lozin~\cite{KL11}, who proved that every graph class~${\cal G}$ of bounded uniformicity is well-quasi-ordered by the so-called labelled induced subgraph relation, which in turn implies that~${\cal G}$ is well-quasi-ordered by the induced subgraph relation. 
Korpelainen and Lozin~\cite{KL11} proved that boundedness of uniformicity is preserved by vertex deletion.
We prove that this also holds for the other two graph operations.

The above enables us to focus on boundedness of uniformicity.
However, we cannot always do this:
there exist graph classes of unbounded uniformicity that are well-quasi-ordered by the labelled induced subgraph relation.
As such, we sometimes need to rely only on the labelled induced subgraph relation directly.
Hence, in Section~\ref{s-permitted} we also show that the three permitted graph operations, 
vertex deletion, subgraph complementation and bipartite complementation, preserve well-quasi-orderability by the labelled induced subgraph relation. 

As explained in Section~\ref{s-state}, we believe that this graph modification technique will also be useful for proving well-quasi-orderability of other graph classes.
As such, we view the results in Section~\ref{s-permitted} as the second main contribution of our paper.

\begin{figure}
\begin{center}
\begin{tabular}{ccccc}
\begin{minipage}{0.18\textwidth}
\centering
\scalebox{0.7}{
{\begin{tikzpicture}[scale=1]
%diamond
%\overline{2P_1+P_2}
\GraphInit[vstyle=Simple]
\SetVertexSimple[MinSize=6pt]
\Vertices{circle}{a,b,d,e}
%\Vertex[x=0,y=0]{a}
%\Vertex[a=30,d=1]{b}
%\Vertex[a=-30,d=1]{e}
%\Vertex[x=1.73205080757,y=0]{d}%Square root of 3
\Edges(e,a,b,e,d,b)
\end{tikzpicture}}}
\end{minipage}
&
\begin{minipage}{0.18\textwidth}
\centering
\scalebox{0.7}{
{\begin{tikzpicture}[scale=1,rotate=90]
%Gem
\GraphInit[vstyle=Simple]
\SetVertexSimple[MinSize=6pt]
\Vertices{circle}{a,b,c,d,e}
\Edges(a,b,c,d,e,a)
\Edges(c,a,d)
\end{tikzpicture}}}
\end{minipage}
&
\begin{minipage}{0.18\textwidth}
\centering
\scalebox{0.7}{
{\begin{tikzpicture}[scale=1,rotate=90]
%P_1+2P_2
\GraphInit[vstyle=Simple]
\SetVertexSimple[MinSize=6pt]
\Vertices{circle}{a,b,c,d,e}
\Edges(b,c)
\Edges(d,e)
\end{tikzpicture}}}
\end{minipage}
&
\begin{minipage}{0.18\textwidth}
\centering
\scalebox{0.7}{
{\begin{tikzpicture}[scale=1,rotate=90]
%P_2+P_3
\GraphInit[vstyle=Simple]
\SetVertexSimple[MinSize=6pt]
\Vertices{circle}{a,b,c,d,e}
\Edges(e,a,b)
\Edges(c,d)
\end{tikzpicture}}}
\end{minipage}
&
\begin{minipage}{0.18\textwidth}
\centering
\scalebox{0.7}{
{\begin{tikzpicture}[scale=1,rotate=180]
%P_2+P_4
\GraphInit[vstyle=Simple]
\SetVertexSimple[MinSize=6pt]
\Vertices{circle}{a,b,c,d,e,f}
\Edges(b,c)
\Edges(d,e,f,a)
\end{tikzpicture}}}
\end{minipage}\\
\\
$\overline{2P_1+P_2}$ & $\overline{P_1+P_4}$ & $P_1+\nobreak 2P_2$ & $P_2+P_3$ & $P_2+\nobreak P_4$
\end{tabular}
\end{center}
\caption{The forbidden induced subgraphs considered in this paper.}
\label{fig:forbidden-graphs}
\end{figure}
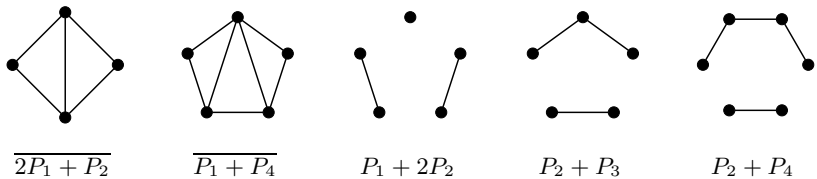

\section{Preliminaries}\label{s-prelim}

The {\em disjoint union} $(V(G)\cup V(H), E(G)\cup E(H))$ of two vertex-disjoint graphs~$G$ and~$H$ is denoted by~$G+\nobreak H$ and the disjoint union of~$r$ copies of a graph~$G$ is denoted by~$rG$. The {\em complement} of a graph~$G$, denoted by~$\overline{G}$, has vertex set $V(\overline{G})=\nobreak V(G)$ and an edge between two distinct vertices
if and only if these vertices are not adjacent in~$G$. 
For a subset $S\subseteq V(G)$, we let~$G[S]$ denote the subgraph of~$G$ {\em induced} by~$S$, which has vertex set~$S$ and edge set $\{uv\; |\; u,v\in S, uv\in E(G)\}$.
If $S=\{s_1,\ldots,s_r\}$ then, to simplify notation, we may also write $G[s_1,\ldots,s_r]$ instead of $G[\{s_1,\ldots,s_r\}]$.
We use $G \setminus S$ to denote the graph obtained from~$G$ by deleting every vertex in~$S$, i.e. $G \setminus S = G[V(G)\setminus S]$.
We write $H\ssi G$ to indicate that~$H$ is isomorphic to an induced subgraph of~$G$.

The graphs $C_r,K_r,K_{1,r-1}$ and~$P_r$ denote the cycle, complete graph, star and path on~$r$ vertices, respectively.
The graph~$K_{1,3}$ is also called the {\em claw}.
The graph~$S_{h,i,j}$, for $1\leq h\leq i\leq j$, denotes the {\em subdivided claw}, that is, the tree that has only one vertex~$x$ of degree~$3$ and exactly three leaves, which are of distance~$h$,~$i$ and~$j$ from~$x$, respectively. Observe that $S_{1,1,1}=K_{1,3}$.
We let ${\cal S}$ denote the class of graphs, each connected component of which is either a subdivided claw or a path.
For a set of graphs $\{H_1,\ldots,H_p\}$, a graph~$G$ is {\em $(H_1,\ldots,H_p)$-free} if it has no induced subgraph isomorphic to a graph in $\{H_1,\ldots,H_p\}$;
if~$p=1$, we may write $H_1$-free instead of $(H_1)$-free.

For a graph $G=(V,E)$ and a vertex $u \in V$, the set $N_G(u)=\{v\in V\; |\; uv\in E\}$ denotes the (open) {\em neighbourhood} of~$u$ in~$G$ and $N_G[u]=N_G(u) \cup \{u\}$ denotes the {\em closed neighbourhood} of~$u$.
We may write~$N(u)$ and~$N[u]$ instead of~$N_G(u)$ and~$N_G[u]$ if this is unambiguous.
A graph is {\em bipartite} if its vertex set can be partitioned into (at most) two independent sets.
The {\em biclique}~$K_{r,s}$ is the bipartite graph with sets in the partition of size~$r$ and~$s$ respectively, such that every vertex in one set is adjacent to every vertex in the other set.

Let~$X$ be a set of vertices of a graph $G=(V,E)$.
A vertex $y\in V\setminus X$ is {\em complete} to~$X$ if it is adjacent to every vertex of~$X$
and {\em anti-complete} to~$X$ if it is non-adjacent to every vertex of~$X$.
Similarly, a set of vertices $Y\subseteq V\setminus X$ is {\em complete} (resp. {\em anti-complete}) to~$X$ if every vertex in~$Y$ is complete (resp. anti-complete) to~$X$.
A vertex $y\in V\setminus X$
{\em distinguishes}~$X$ if~$y$ has both a neighbour and a non-neighbour in~$X$. 
The set~$X$ is a {\em module} of~$G$ if no vertex in $V\setminus X$ distinguishes~$X$.
A module~$X$ is {\em non-trivial} if $1<|X|<|V|$, otherwise it is {\em trivial}. 
A graph is {\em prime} if it has only trivial modules.

A {\em quasi order}~$\leq$ on a set~$X$ is a reflexive, transitive binary relation.
Two elements $x,y \in X$ in this quasi-order are {\em comparable} if $x \leq y$ or $y \leq x$, otherwise they are {\em incomparable}.
A set of elements in a quasi-order is a {\em chain} if every pair of elements is comparable and it is an {\em antichain} if every pair of elements is incomparable.
The quasi-order~$\leq$ is a {\em well-quasi-order}
if any infinite sequence of elements $x_1,x_2,x_3,\ldots$ in~$X$
contains a pair $(x_i,x_j)$ with $x_i \leq x_j$ and $i<j$.
Equivalently, a quasi-order is a well-quasi-order
if and only if it has no infinite strictly decreasing sequence $x_1 \gneq x_2 \gneq x_3 \gneq \cdots$ and no infinite antichain.

For an arbitrary set~$M$, let~$M^*$ denote the set of finite sequences of elements of~$M$.
Any quasi-order~$\leq$ on~$M$ defines a quasi-order~$\leq^*$ on~$M^*$ as follows:
$(a_1,\ldots,a_m) \leq^* (b_1,\ldots,b_n)$ if and only if there is a sequence of integers $i_1,\ldots,i_m$ with $1 \leq i_1<\cdots<i_m \leq n$ such that $a_j \leq b_{i_j}$ for $j \in \{1,\ldots,m\}$.
We call~$\leq^*$ the {\em subsequence relation}.

\begin{lemma}[Higman's Lemma~\cite{Higman52}]\label{lem:higman}
If $(M,\leq)$ is a well-quasi-order then ${(M^*,\leq^*)}$ is a well-quasi-order.
\end{lemma}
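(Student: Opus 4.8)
The plan is to use the classical minimal bad sequence argument of Nash-Williams. I would work with the characterisation of a well-quasi-order as a quasi-order in which every infinite sequence contains a \emph{good pair} $(x_i,x_j)$ with $i<j$ and $x_i\leq x_j$, and call an infinite sequence \emph{bad} if it contains no good pair. Suppose, for contradiction, that $(M^*,\leq^*)$ is not a well-quasi-order, so that it admits some infinite bad sequence.

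First I would extract a \emph{minimal} bad sequence $s_1,s_2,\ldots$ of elements of $M^*$, chosen recursively so that, for each $n$, the word $s_n$ has minimum possible length among all words $w$ for which $s_1,\ldots,s_{n-1},w$ is a prefix of some infinite bad sequence. Such minima exist because word lengths are natural numbers, and the recursive choice uses dependent choice. No $s_n$ is the empty word, since the empty word is $\leq^*$-below every word and so would immediately create a good pair; hence we may write $s_n=a_n w_n$, where $a_n\in M$ is the first letter of $s_n$ and $w_n\in M^*$ is the remaining word, which is strictly shorter than $s_n$.

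Next I would use the hypothesis that $(M,\leq)$ is a well-quasi-order to find an infinite non-decreasing subsequence $a_{i_1}\leq a_{i_2}\leq\cdots$ with $i_1<i_2<\cdots$; this standard consequence follows from Ramsey's theorem (two-colour each pair $\{i,j\}$ according to whether $a_i\leq a_j$, take an infinite monochromatic set, and rule out the other colour using the absence of infinite antichains and infinite strictly decreasing sequences in $M$). Now I would consider the sequence $s_1,\ldots,s_{i_1-1},w_{i_1},w_{i_2},w_{i_3},\ldots$ and claim it is bad. Indeed, a good pair among the first $i_1-1$ terms would already occur in $s_1,s_2,\ldots$; a good pair $w_{i_j}\leq^* w_{i_k}$ with $j<k$ would yield $s_{i_j}=a_{i_j}w_{i_j}\leq^* a_{i_k}w_{i_k}=s_{i_k}$, again a good pair of the minimal bad sequence; and a good pair $s_m\leq^* w_{i_j}$ with $m<i_1$ would yield $s_m\leq^* w_{i_j}\leq^* s_{i_j}$ (since $w_{i_j}$ is a subword of $s_{i_j}$) with $m<i_1\leq i_j$, a contradiction as well. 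But this new bad sequence agrees with $s_1,s_2,\ldots$ in its first $i_1-1$ terms and has $w_{i_1}$ in position $i_1$, which is strictly shorter than $s_{i_1}$, contradicting the minimal choice of $s_{i_1}$. Hence $(M^*,\leq^*)$ is a well-quasi-order.

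I expect the main obstacle to be setting up the minimal bad sequence correctly and then verifying, case by case, that the three kinds of pairs — both indices below $i_1$, both among the $i_j$, and one of each — can never be good. The mixed case, which relies on $w_{i_j}\leq^* s_{i_j}$ together with the index inequality $m<i_1\leq i_j$, is the one that makes the whole construction close up, so it is the step that deserves the most care.
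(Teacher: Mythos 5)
Your proof is correct: it is the standard Nash-Williams minimal bad sequence argument, and all three cases in your verification that the modified sequence $s_1,\ldots,s_{i_1-1},w_{i_1},w_{i_2},\ldots$ is bad are handled properly (in particular the mixed case via $s_m\leq^* w_{i_j}\leq^* s_{i_j}$ and $m<i_1\leq i_j$, and the prepending step $a_{i_j}\leq a_{i_k}$, $w_{i_j}\leq^* w_{i_k}$ implying $s_{i_j}\leq^* s_{i_k}$). Note, however, that the paper does not prove this lemma at all — it simply cites Higman's original result — so there is no in-paper argument to compare against; your write-up supplies a proof the authors take as known. Two small points of care if you were to write this out in full: you should state explicitly why the recursively constructed sequence $s_1,s_2,\ldots$ is itself bad (every finite prefix extends to an infinite bad sequence, so no good pair can ever appear), and in the extraction of a non-decreasing subsequence of $(a_n)$ the cleanest phrasing is that an infinite set monochromatic in the colour ``$a_i\not\leq a_j$'' would itself be a bad sequence in $M$, contradicting that $(M,\leq)$ is a well-quasi-order; your appeal to the antichain/descending-chain characterisation works but needs a three-colour version of Ramsey to be airtight.
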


\subsection*{Labelled Induced Subgraphs and Uniformicity}
To define the notion of labelled induced subgraphs, let us consider an arbitrary quasi-order $(W,\leq)$. 
We say that~$G$ is a {\em labelled} graph if each vertex~$v$ of~$G$ is equipped with an element $l_G(v)\in W$ (the {\em label} of~$v$).
Given two labelled graphs~$G$ and~$H$, we say that~$G$ is a {\em labelled induced subgraph} of~$H$ if~$G$ is isomorphic to an induced subgraph of~$H$ and there is an isomorphism that maps each vertex~$v$ of~$G$ to a vertex~$w$ of~$H$ with $l_G(v)\leq l_H(w)$. 
Clearly, if $(W,\leq)$ is a well-quasi-order, then a class of graphs~$X$ cannot contain an infinite sequence of labelled graphs that is strictly-decreasing with respect to the labelled induced subgraph relation.
We therefore say that a class of graphs~$X$ is well-quasi-ordered by the {\em labelled} induced subgraph relation if it contains no infinite antichains of labelled graphs whenever $(W,\leq)$ is a 
{\em well}-quasi-order.
Such a class is readily seen to also be well-quasi-ordered by the induced subgraph relation.

We will use the following three results.

\begin{lemma}[\cite{AL15}]\label{lem:p6-bip-wqo}
The class of $P_6$-free bipartite graphs is well-quasi-ordered by the labelled induced subgraph relation. 
\end{lemma}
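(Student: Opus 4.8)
The plan is to combine a structural decomposition of $P_6$-free bipartite graphs with Higman's Lemma (Lemma~\ref{lem:higman}). First I would dispose of the easy reductions. A disconnected graph is a finite sequence of connected ones and an induced subgraph of a disjoint union splits accordingly, so by Higman's Lemma it is enough to prove the statement for \emph{connected} $P_6$-free bipartite graphs. Then I would apply the standard modular-decomposition reduction (used, e.g., by Korpelainen and Lozin~\cite{KL11}): a hereditary class is well-quasi-ordered by the labelled induced subgraph relation as soon as the class of its prime members is, so it suffices to treat \emph{prime} connected $P_6$-free bipartite graphs $G$.

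Next I would exploit $P_6$-freeness globally. Since a shortest path between two vertices is an induced path, a connected bipartite $P_6$-free graph has diameter at most~$4$. Fixing a vertex $v$ and taking the distance layers $L_0=\{v\},L_1,\dots,L_4$, we obtain: no edges inside a layer (bipartiteness), no edges between $L_i$ and $L_j$ when $|i-j|\ge 2$ (distances), $v$ complete to $L_1$, and every vertex of $L_{i+1}$ has a neighbour in $L_i$. Thus $G$ is completely determined by the four bipartite graphs between consecutive layers, $L_0\!-\!L_1$, $L_1\!-\!L_2$, $L_2\!-\!L_3$, $L_3\!-\!L_4$, together with the way they interlock on the layers they share.

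The heart of the proof --- and the step I expect to be the main obstacle --- is a structural dichotomy for these inter-layer bipartite graphs. The idea is that an induced $2P_2$ sitting inside $L_i\!-\!L_{i+1}$, combined with $v$ and with neighbours that $P_6$-freeness forces to lie in $L_{i-1}$ or $L_{i+2}$, typically completes to an induced $P_6$; excluding this, and using primeness to kill twins, one aims to show that after deleting a bounded number of ``exceptional'' vertices each $L_i\!-\!L_{i+1}$ is a \emph{chain graph} (the neighbourhoods on one side are linearly ordered by inclusion) or a disjoint union of bicliques, and that the interface between two consecutive pieces is describable by a bounded amount of data. Making this precise --- in particular controlling the compatibility of the two linear orders that consecutive chain graphs impose on their shared layer --- is the genuinely technical part of the argument.

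Finally I would assemble the pieces. A chain graph is a $2$-letter graph, so the class of labelled chain graphs is well-quasi-ordered by the labelled induced subgraph relation by applying Higman's Lemma to the encoding words over the (well-quasi-ordered) alphabet of (letter, vertex-label) pairs. Gluing a bounded number of such ``strips'' along a bounded interface, and folding the bounded gluing data and the bounded set of deleted exceptional vertices into an enriched --- still well-quasi-ordered --- label set, one more application of Higman's Lemma yields labelled well-quasi-orderability of prime connected $P_6$-free bipartite graphs, and hence, by the first paragraph, of the whole class. (Equivalently, the final bookkeeping may be phrased through bounded uniformicity in the sense of Korpelainen and Lozin or through bounded lettericity, provided the structural analysis is uniform enough; the full details are in~\cite{AL15}.)
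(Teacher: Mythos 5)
The paper does not prove this lemma at all: it is imported verbatim from Atminas and Lozin~\cite{AL15} (like Lemmas~\ref{lem:Pk-Kl-Kmm-wqo} and~\ref{lem:prime}), so there is no in-paper argument to compare yours against; the only relevant comparison is with the proof in~\cite{AL15}, to which your sketch itself ultimately defers (``the full details are in~\cite{AL15}''). That deferral already makes the attempt circular as a blind proof of this particular statement: the reductions you do carry out (disconnected $\to$ connected via Higman, connected $\to$ prime via Lemma~\ref{lem:prime}, diameter at most~$4$ and the BFS-layer picture) are fine but routine, and the entire content of the lemma is concentrated in the step you explicitly leave open.

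Worse, the structural dichotomy you propose for that step is false as stated. Take $G=K_{n,n}$ minus a perfect matching ($a_i$ adjacent to $b_j$ iff $i\neq j$). This graph is bipartite, connected, prime, and $P_6$-free (an induced $P_6$ in a bipartite graph has three vertices on each side, and the non-adjacency conditions would force $i_1=j_2$ and $i_1=j_3$ with $j_2\neq j_3$). Running your layering from $v=a_1$ gives $L_1=\{b_j: j\neq 1\}$, $L_2=\{a_i: i\neq 1\}$, $L_3=\{b_1\}$, and the bipartite graph between $L_1$ and $L_2$ is again a co-matching: it contains linearly many pairwise disjoint induced copies of $2P_2$ and of $P_4$, so it is neither a chain graph nor a disjoint union of bicliques, and no deletion of a bounded number of ``exceptional'' vertices repairs this. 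Hence the heart of your plan --- ``each inter-layer graph becomes a chain graph or a union of bicliques after boundedly many deletions, glued along a bounded interface'' --- cannot be the right invariant; the actual argument of~\cite{AL15} rests on a genuinely different and more refined structural analysis of $P_6$-free bipartite graphs rather than on this layer-by-layer chain-graph decomposition. As it stands, the proposal is a plausible-looking programme with its crucial claim both unproved and refutable, so it does not establish the lemma.
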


\begin{lemma}[\cite{AL15}]\label{lem:Pk-Kl-Kmm-wqo}
Let $k,\ell,m$ be positive integers.
Then the class of $(P_k,K_\ell,\allowbreak K_{m,m})$-free graphs is well-quasi-ordered by the labelled induced subgraph relation.
\end{lemma}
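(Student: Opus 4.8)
The plan is to prove something stronger for this particular class, namely that it has \emph{bounded tree-depth}, and then to show, independently of the forbidden subgraphs, that every class of bounded tree-depth is well-quasi-ordered by the labelled induced subgraph relation. Recall that the tree-depth $\operatorname{td}(G)$ of a graph $G$ is the smallest possible height of a rooted forest $F$ on the vertex set $V(G)$ in which every edge of $G$ joins a vertex to one of its ancestors. Taking $F$ to be a depth-first search forest of $G$ --- which is such a forest, since every non-tree edge of $G$ joins an ancestor--descendant pair --- and noting that every root-to-leaf branch of $F$ is a path of $G$, we see that $\operatorname{td}(G)$ is at most the height of $F$, and hence at most the maximum number of vertices on a path of $G$. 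We also use the trivial fact that tree-depth does not increase when passing to a subgraph.

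To bound the tree-depth I would invoke the theorem of Atminas, Lozin and Razgon: for all integers $p$ and $t$ there is an integer $N(p,t)$ such that every graph containing a path on $N(p,t)$ vertices as a subgraph contains $P_p$ as an induced subgraph or $K_{t,t}$ as a subgraph. By Ramsey's theorem, choose an integer $t=t(\ell,m)$ such that every graph on $t$ vertices contains a clique of size $\ell$ or an independent set of size $m$, and put $N=N(k,t)$. Suppose some $(P_k,K_\ell,K_{m,m})$-free graph $G$ contains a path on $N$ vertices. Then $G$ contains an induced $P_k$ --- impossible --- or $G$ contains $K_{t,t}$ as a subgraph, on disjoint vertex sets $X$ and $Y$ of size $t$ with all $X$--$Y$ edges present; since $G$ is $K_\ell$-free, $G[X]$ and $G[Y]$ contain independent sets $X'$ and $Y'$ of size $m$, and then $G[X'\cup Y']$ is an induced $K_{m,m}$ --- also impossible. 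Hence $G$ has no path on $N$ vertices, so by the previous paragraph $\operatorname{td}(G)\le N-1=:d$. Thus every $(P_k,K_\ell,K_{m,m})$-free graph has tree-depth at most $d$.

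It remains to prove that for every integer $d$ and every well-quasi-order $(W,\le)$, the class $\mathcal{G}_d^W$ of $W$-labelled graphs of tree-depth at most $d$ is well-quasi-ordered by the labelled induced subgraph relation; since the $(P_k,K_\ell,K_{m,m})$-free graphs, with labels from an arbitrary well-quasi-order $W$, form a subclass of $\mathcal{G}_d^W$, they then contain no infinite antichain either. I would argue by induction on $d$. For $d\le1$ the graphs in $\mathcal{G}_d^W$ are edgeless, hence amount to finite sequences of elements of $W$, so Lemma~\ref{lem:higman} applies. For the inductive step, take a \emph{connected} $W$-labelled graph $G$ with $\operatorname{td}(G)\le d$. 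A tree-depth decomposition of $G$ of height at most $d$ is necessarily a single tree, since no edge of $G$ can join two different trees of a forest; let $v$ be its root. Removing $v$ leaves a valid decomposition of $G-v$ of height at most $d-1$, so each component $C_i$ of $G-v$ ($i=1,\ldots,q$) has $\operatorname{td}(C_i)\le d-1$. Attach to every vertex $u$ of every $C_i$ an extra bit $b_u\in\{0,1\}$ recording whether $uv\in E(G)$, thereby viewing $C_i$ as a labelled graph $C_i^{+}$ over the well-quasi-order $W\times\{0,1\}$ in which $\{0,1\}$ carries the \emph{trivial} order, so that comparability of new labels forces the bits to be equal; encode $G$ by the pair $\bigl(l_G(v),(C_1^{+},\ldots,C_q^{+})\bigr)$. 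By the induction hypothesis the connected members of $\mathcal{G}_{d-1}^{W\times\{0,1\}}$ are well-quasi-ordered, so by Lemma~\ref{lem:higman} their finite sequences are too, and hence (a finite product of well-quasi-orders being a well-quasi-order) so are these encoding pairs. Finally, if the pair encoding $G$ is below the pair encoding another such graph $G'$, with root $v'$ and components $D_1,\ldots,D_{q'}$ of $G'-v'$, then $G$ embeds into $G'$ as a labelled induced subgraph: map $v$ to $v'$ and map each $C_i$ into its matched component $D_j$ by the supplied embedding, distinct $C_i$ going to distinct $D_j$. Adjacency to the root is respected because comparable labels have equal bits $b_u$; adjacency within a component is respected by the induced embedding of $C_i^{+}$; and two vertices in distinct components $C_i$ are non-adjacent and are sent into distinct components of $G'-v'$. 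Hence the connected members of $\mathcal{G}_d^W$ are well-quasi-ordered, and since every member of $\mathcal{G}_d^W$ is a disjoint union of connected ones, one more application of Lemma~\ref{lem:higman} completes the induction.

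The one real obstacle is the tree-depth bound, and within it the Atminas--Lozin--Razgon fact that a sufficiently long path forces a long induced path or a large biclique; the depth-first-search bound on tree-depth, the monotonicity of tree-depth, and the Higman-based induction are all soft. If one wanted a self-contained argument in place of that citation, the natural route is: a connected $P_k$-free graph has diameter at most $k-2$ (a shortest path is induced), so a breadth-first layering from one endpoint of the long path has at most $k-1$ layers; some layer is therefore enormous; $K_\ell$-freeness extracts a large independent set inside that layer; and a pigeonhole-and-Ramsey analysis of the edges between consecutive layers is iterated to force a $K_{m,m}$, a contradiction. That Ramsey-type step carries essentially all the difficulty.
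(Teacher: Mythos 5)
This lemma is not proved in the paper at all: it is imported verbatim from Atminas and Lozin~\cite{AL15}, so there is no in-paper argument to compare against. Your proof is correct and, in spirit, reconstructs the known route to this result. The two halves are sound: (1) the tree-depth bound works because the Atminas--Lozin--Razgon theorem (long path as a subgraph forces a long induced path or a large biclique subgraph) combined with Ramsey's theorem with $t=R(\ell,m)$ correctly turns a $K_{t,t}$ subgraph into an induced $K_{m,m}$ in a $K_\ell$-free graph, and the DFS-forest observation legitimately bounds tree-depth by the longest path; (2) your induction that, for every well-quasi-ordered label set, graphs of tree-depth at most $d$ are well-quasi-ordered by the labelled induced subgraph relation is the standard Ding/Higman-style argument, and you handle the delicate points properly -- quantifying the induction hypothesis over all label wqos so that you may pass to $W\times\{0,1\}$, giving the adjacency-to-root bits the trivial order so comparability forces equal bits, and using injectivity of the Higman embedding to keep distinct components in distinct components (preserving non-adjacency across components). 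The only external ingredient you rely on is the Atminas--Lozin--Razgon path-versus-biclique theorem, which is a genuine published result and a fair black box; everything else is self-contained. One stylistic remark: your intermediate statement (bounded tree-depth implies well-quasi-ordering under the labelled induced subgraph relation) is stronger than what Theorem~\ref{thm:uniform} gives via uniformicity and could be quoted as such, but as written your argument stands on its own.
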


\begin{lemma}[\cite{AL15}]\label{lem:prime}
Let~$X$ be a hereditary class of graphs.
Then~$X$ is well-quasi-ordered by the labelled induced subgraph relation if and only if the set of prime graphs in~$X$ is.
In particular, $X$ is well-quasi-ordered by the labelled induced subgraph relation if and only if the set of connected graphs in~$X$ is. 
\end{lemma}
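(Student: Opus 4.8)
The plan is to derive both equivalences from \emph{modular (substitution) decomposition} together with a minimal bad sequence argument of Nash--Williams type, using only Higman's Lemma (Lemma~\ref{lem:higman}) and the hypothesis on prime graphs. One direction of each equivalence is immediate: the set of prime graphs in $X$, and the set of connected graphs in $X$, are each subsets of $X$, and any subset of a class that is well-quasi-ordered by the labelled induced subgraph relation is itself so, since a bad sequence of $W$-labelled graphs in the subset is also a bad sequence in $X$ for the same well-quasi-order $(W,\le)$ of labels. For the ``connected'' version, one further observes that $2P_1$ is the \emph{only} disconnected prime graph, so the prime graphs in $X$ form a subset of the connected graphs in $X$ together with at most the single graph $2P_1$; since the labelled copies of $2P_1$ are well-quasi-ordered on their own by Higman's Lemma, and a union of two well-quasi-ordered sets is well-quasi-ordered, the ``connected'' equivalence follows from the ``prime'' one. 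Thus it remains to prove: if the prime graphs in $X$ are well-quasi-ordered by the labelled induced subgraph relation for \emph{every} well-quasi-order of labels, then so is $X$.

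Fix a well-quasi-order $(W,\le)$ and, for contradiction, a bad sequence $G_1,G_2,\dots$ of $W$-labelled graphs in $X$ (no $G_i\li G_j$ for $i<j$, where $\li$ henceforth denotes the labelled induced subgraph relation). Choose it \emph{minimal}: $|V(G_1)|$ smallest over first terms of bad sequences, and, having fixed $G_1,\dots,G_k$, $|V(G_{k+1})|$ smallest over $(k+1)$-st terms of bad sequences extending them. Only finitely many $G_i$ can be single vertices (else $(W,\le)$ would not be a well-quasi-order), so all but finitely many $G_i$ have at least two vertices and, by the classical modular decomposition theorem of Gallai, fall into exactly one of two types: \textbf{(a)} $G_i$ or $\overline{G_i}$ is disconnected, so $G_i$ is the disjoint union, respectively the join, of at least two of its proper induced subgraphs; \textbf{(b)} $G_i$ arises from a prime graph $P_i$ on at least four vertices by substituting the graphs $G_i[M]$ induced by the maximal strong modules $M$ into the vertices of $P_i$ (and each such $P_i$, being an induced subgraph of $G_i\in X$, is a prime graph in $X$). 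Let $\mathcal D$ be the set of all the pieces $G_i[M]$ arising this way, each carrying the labels it inherits from its $G_i$; every such piece is a proper induced subgraph of some $G_i$.

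The key step is to show $(\mathcal D,\li)$ is a well-quasi-order. If $D_1,D_2,\dots$ were a bad sequence in $\mathcal D$ with $D_j$ a piece of $G_{f(j)}$, put $m=\min_j f(j)$, first attained at index $j_0$; then $G_1,\dots,G_{m-1},D_{j_0},D_{j_0+1},\dots$ is again bad --- a relation $G_a\li D_b$ with $a<m\le f(b)$ would yield $G_a\li D_b\ssi G_{f(b)}$ by transitivity, contradicting badness of the $G_i$ --- and its $m$-th term has strictly fewer vertices than $G_m$, contradicting minimality. Given that $(\mathcal D,\li)$ is a well-quasi-order, split the surviving indices by which of (a), (b) applies and pass to an infinite bad subsequence of constant type. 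If every $G_i$ is a disjoint union, code it by the finite sequence of its connected components, which lie in $(\mathcal D,\li)$; Higman's Lemma gives $a<b$ with the code of $G_a$ below that of $G_b$ in the subsequence relation, and because the components are pairwise anti-complete these labelled-induced embeddings combine into $G_a\li G_b$, a contradiction (the ``join'' case is the same after complementation). If every $G_i$ is of type (b), code it as the prime graph $P_i$ with each vertex labelled by its module $G_i[M]\in\mathcal D$; since $(\mathcal D,\li)$ is a well-quasi-order, the hypothesis applied to the label set $(\mathcal D,\li)$ gives $a<b$ with $P_a\li P_b$ respecting these labels, and expanding each vertex of $P_a$ into its module via the guaranteed embedding into the module of its image vertex --- using that an induced-subgraph isomorphism between the quotient graphs preserves the complete/anti-complete pattern between modules --- yields $G_a\li G_b$, again a contradiction. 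Hence no bad sequence exists and $X$ is well-quasi-ordered by the labelled induced subgraph relation.

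I expect the main obstacle to be the careful bookkeeping of the minimal bad sequence argument rather than any single deep ingredient: verifying that the spliced sequence $G_1,\dots,G_{m-1},D_{j_0},D_{j_0+1},\dots$ is indeed bad (this hinges on transitivity of $\li$ along $\ssi$ and on taking minimality lexicographically in the vertex counts), and checking in each of the three reconstruction cases that embeddings of the decomposition pieces genuinely glue into one labelled induced subgraph of the larger graph. The modular-decomposition input --- uniqueness of the root decomposition into the two listed types, and primeness of the quotient in case (b) --- is standard and can simply be quoted.
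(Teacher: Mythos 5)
Your proof is correct. Note that the paper does not prove Lemma~\ref{lem:prime} at all --- it is imported from~\cite{AL15} --- so the comparison is with that source rather than with anything in the text; your route (Gallai's modular decomposition, a Nash--Williams minimal bad sequence argument, and Higman's Lemma) is the standard substitution-closure argument and is essentially the same circle of ideas as in the cited work. The delicate points all check out: the spliced sequence $G_1,\dots,G_{m-1},D_{j_0},D_{j_0+1},\dots$ is bad because any relation $G_a \li D_b$ composes with $D_b \ssi G_{f(b)}$ and $a \le m-1 < m \le f(b)$, so it would contradict badness of the original sequence, while its $m$-th term is strictly smaller than $G_m$; every piece is a proper induced subgraph (a prime quotient has at least four maximal strong modules, a disconnected graph at least two components); hereditariness of $X$ is exactly what places each quotient $P_i$ among the prime graphs in $X$, so the hypothesis applies with the well-quasi-ordered label set $(\mathcal{D},\li)$; and the gluing in the three reconstruction cases is legitimate because distinct maximal strong modules are pairwise complete or anti-complete as dictated by the quotient, and components (co-components) are pairwise anti-complete (complete). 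Two cosmetic remarks. First, your $\mathcal{D}$ must be read as containing the pieces from the disconnected and join cases as well --- you use this when coding by components; this is automatic if you define $\mathcal{D}$ uniformly via maximal strong modules, since for a disconnected graph these are exactly the connected components, and dually for joins. Second, your reduction of the ``connected'' statement to the ``prime'' one, via the observation that $2P_1$ is the only disconnected prime graph under the paper's convention (which makes $2$-vertex graphs prime), is correct and tidy.
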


Let~$k$ be a natural number, let $K$ be a symmetric square $0,1$ matrix of order~$k$, and let~$F_k$ be a graph on the vertex set
$\{1, 2, \ldots , k\}$. Let~$H$ be the disjoint union of infinitely many copies of~$F_k$, and for $i = 1, \ldots , k$, let~$V_i$ be the subset of~$V(H)$ containing vertex~$i$ from each copy of~$F_k$. Now we construct from~$H$ an infinite graph~$H(K)$ on the same vertex set by applying a subgraph complementation to~$V_i$ if and only if $K(i,i)=1$ and by applying bipartite complementation to 
a pair $V_i,V_j$ if and only if $K(i,j)=1$. In other words, two vertices $u\in V_i$ and $v\in V_j$ are adjacent in~$H(K)$ 
if and only if $uv \in E(H)$ and $K(i, j) = 0$ or $uv \notin E(H)$ and $K(i, j) = 1$. Finally,
let ${\cal P}(K, F_k)$ be the hereditary class consisting of all the finite induced subgraphs of~$H(K)$.

Let~$k$ be a natural number.
A graph~$G$ is $k$-{\em uniform} if there is a matrix~$K$ and a graph~$F_k$ such that $G\in {\cal P}(K, F_k)$.
The minimum~$k$ such that~$G$ is $k$-uniform is the {\em uniformicity} of~$G$. 

The following result was proved by Korpelainen and Lozin. 
The class of disjoint unions of cliques is a counterexample for the reverse implication.

\begin{theorem}[\cite{KL11}]\label{thm:uniform}
Any class of graphs of bounded uniformicity is well-quasi-ordered by the labelled induced subgraph relation.
\end{theorem}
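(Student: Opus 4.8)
The plan is to reduce to a single class $\mathcal{P}(K,F_k)$ and then encode its labelled members as words over a well-quasi-ordered alphabet, so that Higman's Lemma (Lemma~\ref{lem:higman}) does the work.

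\emph{Reduction.} Suppose $\mathcal{G}$ has uniformicity at most $k$. Then every graph in $\mathcal{G}$ lies in some $\mathcal{P}(K,F_k)$ where $K$ ranges over the symmetric $0,1$ matrices of order $k$ and $F_k$ over the graphs on vertex set $\{1,\dots,k\}$; there are only finitely many such pairs $(K,F_k)$. A finite union of classes that are each well-quasi-ordered by the labelled induced subgraph relation is again well-quasi-ordered by this relation: an infinite antichain of labelled graphs in the union would, by the pigeonhole principle, contain an infinite antichain inside one of the classes. Hence it suffices to prove that a single $\mathcal{P}(K,F_k)$ is well-quasi-ordered by the labelled induced subgraph relation; fix such a pair and fix an arbitrary well-quasi-order $(W,\leq)$ on the labels.

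\emph{Structure of $H(K)$.} Enumerate the copies of $F_k$ in $H$ by $\mathbb{N}$ and write $v_i^{(c)}$ for vertex $i$ of copy $c$, so $V_i=\{v_i^{(c)}:c\in\mathbb{N}\}$. Unwinding the definition of $H(K)$, the vertices $v_i^{(c)}$ and $v_j^{(c')}$ are adjacent in $H(K)$ if and only if either $c=c'$, $i\neq j$ and exactly one of ``$ij\in E(F_k)$'' and ``$K(i,j)=1$'' holds, or $c\neq c'$ and $K(i,j)=1$. The key point is that \emph{within} a copy the induced graph is a fixed graph on $\{1,\dots,k\}$, while \emph{across} distinct copies the adjacency of $v_i^{(\cdot)}$ and $v_j^{(\cdot)}$ depends only on the part indices $i,j$ and not on which copies are involved.

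\emph{Encoding and conclusion.} Let $\mathcal{A}$ be the set of pairs $(S,\lambda)$ with $S\subseteq\{1,\dots,k\}$ and $\lambda\colon S\to W$, quasi-ordered by $(S,\lambda)\preceq(S',\lambda')$ if and only if $S\subseteq S'$ and $\lambda(i)\leq\lambda'(i)$ for all $i\in S$. Since there are finitely many subsets $S$ and, for each fixed $S$, the product order on $W^S$ is a well-quasi-order, $(\mathcal{A},\preceq)$ is a well-quasi-order, so by Higman's Lemma $(\mathcal{A}^*,\preceq^*)$ is too. Given a labelled graph $G\in\mathcal{P}(K,F_k)$, fix an embedding of $G$ as an induced subgraph of $H(K)$, let $c_1<\dots<c_m$ be the copies it meets, let $S_t$ record the parts used in copy $c_t$ and $\lambda_t$ the corresponding labels, and set $w(G)=((S_1,\lambda_1),\dots,(S_m,\lambda_m))\in\mathcal{A}^*$. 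I claim $w(G)\preceq^* w(G')$ implies $G$ is a labelled induced subgraph of $G'$: if $(S_t,\lambda_t)\preceq(S'_{i_t},\lambda'_{i_t})$ with $i_1<\dots<i_m$, then $v_j^{(c_t)}\mapsto v_j^{(c'_{i_t})}$ is well defined since $S_t\subseteq S'_{i_t}$, is label-increasing since $\lambda_t(j)\leq\lambda'_{i_t}(j)$, and is adjacency-preserving because distinct copies map to distinct copies, so by the structural description both the within-copy and across-copy adjacencies are matched. Consequently an infinite antichain $G_1,G_2,\dots$ of labelled graphs in $\mathcal{P}(K,F_k)$ would yield an infinite sequence $w(G_1),w(G_2),\dots$ in $\mathcal{A}^*$ with $w(G_i)\not\preceq^* w(G_j)$ for all $i\neq j$, contradicting that $(\mathcal{A}^*,\preceq^*)$ is a well-quasi-order.

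The routine parts are the reduction to finitely many $\mathcal{P}(K,F_k)$ and the verification that $(\mathcal{A},\preceq)$ is a well-quasi-order. The step requiring care is pinning down the adjacency pattern of $H(K)$ and, guided by it, choosing the alphabet order correctly — in particular demanding $S\subseteq S'$ (the identity embedding on part indices) rather than $S=S'$, and relying on the fact that across-copy adjacency is copy-independent — so that the encoding $G\mapsto w(G)$ is monotone with respect to $\preceq^*$ and the labelled induced subgraph relation.
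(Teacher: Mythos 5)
Your proof is correct: the reduction to finitely many classes $\mathcal{P}(K,F_k)$, the explicit adjacency description of $H(K)$, and the Higman-type encoding of each embedded graph as a word over the well-quasi-ordered alphabet of (parts used, labels) per copy — with $S\subseteq S'$ and the identity on part indices so that both within-copy and across-copy adjacencies transfer — together give a sound argument. Note that the paper itself gives no proof of this theorem (it is quoted from~\cite{KL11}), and your argument is essentially the standard one used there, so there is nothing to flag beyond the routine observation that an $\ell$-uniform graph with $\ell\le k$ is also $k$-uniform (pad $F_\ell$ with isolated vertices and $K$ with zeros), which your reduction implicitly uses.
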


\section{Permitted Graph Operations}\label{s-permitted}

It is not difficult to see that if~$G$ is an induced subgraph of~$H$, then~$\overline{G}$ is an induced subgraph of~$\overline{H}$. 
Therefore, a graph class~$X$ is well-quasi-ordered by the induced subgraph relation if and only if the set of complements of graphs in~$X$ is. In this section,
we strengthen this observation in several ways. 

First, we define the operation of subgraph complementation as follows. 

\begin{definition}
{\em Subgraph complementation} in a graph~$G$ is the operation of complementing a subgraph of~$G$ induced by a subset of its vertices. 
\end{definition}

Applied to the entire vertex set of~$G$, this operation coincides with the usual complementation of~$G$.
However, applied to a pair of vertices, it changes the adjacency of these vertices only.
Clearly, repeated applications of this operation can transform~$G$ into any other graph on the same vertex set.
Therefore, unrestricted applications of subgraph complementation may transform a well-quasi-ordered class~$X$ into a class containing infinite antichains.
However, if we bound the number of applications of this operation by a constant, we preserve many nice properties of~$X$, including well-quasi-orderability with respect to the labelled induced subgraph relation.

Next, we introduce the following operation:
\begin{definition}
{\em Bipartite complementation} in a graph~$G$ is the operation of complementing the edges between two disjoint subsets $X,Y \subseteq V(G)$. 
\end{definition}
Note that applying a bipartite complementation between~$X$ and~$Y$  has the same effect as applying a sequence of three subgraph complementations: with respect to~$X$, $Y$ and $X\cup Y$.

Finally, we define the following operation:
\begin{definition}
{\em Vertex deletion} in a graph~$G$ is the operation of removing a single vertex~$v$ from a graph, together with any edges incident to~$v$.
\end{definition}

\subsection{Operations on Labelled Graphs}

Let $k\geq 0$ be a constant and let~$\gamma$ be a graph operation.
A graph class~${\cal G'}$ is {\em $(k,\gamma)$-obtained} from a graph class~${\cal G}$
if the following two conditions hold:
\begin{enumerate}[(i)]
\item every graph in~${\cal G'}$ is obtained from a graph in~${\cal G}$ by performing~$\gamma$ at most~$k$ times, and
\item for every $G\in {\cal G}$ there exists at least one graph 
in~${\cal G'}$ obtained from~$G$ by performing~$\gamma$ at most~$k$ times.
\end{enumerate}

\noindent
We say that~$\gamma$ {\em preserves} well-quasi-orderability by
the labelled induced subgraph relation if for any finite constant~$k$ and any graph class~${\cal G}$, any graph class~${\cal G}'$ that is $(k,\gamma)$-obtained from~${\cal G}$ is well-quasi-ordered by this relation if and only if~${\cal G}$ is.

\newpage
\begin{lemma}\label{lem:lwqo-operations}
The following operations preserve well-quasi-orderability by the labelled induced subgraph relation:
\begin{enumerate}[(i)]
\renewcommand{\theenumi}{\thelemma.(\roman{enumi})}
\renewcommand{\labelenumi}{(\roman{enumi})}
\item\label{lem:subgraph-complementation} Subgraph complementation,
\item\label{lem:bipartite-complementation} Bipartite complementation and
\item\label{lem:adding-vertices} Vertex deletion.
\end{enumerate}
\end{lemma}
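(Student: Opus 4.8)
The plan is to prove all three parts by a labelling argument: given a well-quasi-order $(W,\leq)$ on labels, we encode, for each graph in $\mathcal{G}'$, enough extra information into the labels so that the (labelled) induced subgraph relation on $\mathcal{G}'$ can be ``pulled back'' to the (labelled) induced subgraph relation on $\mathcal{G}$, and vice versa. Since it suffices to bound the number of applications by a constant $k$, and since a bipartite complementation is a composition of three subgraph complementations while a single complementation is a subgraph complementation on the whole vertex set, the whole lemma reduces to the case of one subgraph complementation (part~\ref{lem:subgraph-complementation}) plus the case of one vertex deletion (part~\ref{lem:adding-vertices}); iterating a constant number of times and composing then yields the general statement. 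So I would first state this reduction explicitly and then handle the two atomic cases.

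For subgraph complementation: suppose $\mathcal{G}'$ is $(1,\gamma)$-obtained from $\mathcal{G}$ with $\gamma$ a single subgraph complementation. For the non-trivial direction, assume $\mathcal{G}$ is well-quasi-ordered by the labelled induced subgraph relation and let $(W,\leq)$ be a well-quasi-order. Take an infinite sequence $G'_1,G'_2,\dots$ of $W$-labelled graphs from $\mathcal{G}'$; each $G'_i$ arises from some $G_i\in\mathcal{G}$ by complementing the subgraph induced by a set $S_i\subseteq V(G_i)$. Relabel: give each vertex $v$ of $G_i$ the new label $(l_{G'_i}(v),\,b_i(v))$ where $b_i(v)\in\{0,1\}$ records whether $v\in S_i$, and order these pairs by $(w,b)\leq'(w',b')$ iff $w\leq w'$ and $b=b'$. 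This $(W\times\{0,1\},\leq')$ is still a well-quasi-order (a finite product of well-quasi-orders, by Higman's Lemma or directly). By hypothesis some $G_i$ is a labelled induced subgraph of $G_j$ with $i<j$ via an embedding $\phi$; because the labels match on the second coordinate, $\phi$ maps $S_i$ into $S_j$ and $V(G_i)\setminus S_i$ into $V(G_j)\setminus S_j$, so $\phi$ remains an induced-subgraph embedding after we complement inside $S_i$ and $S_j$ — the adjacencies that get flipped are exactly the ones inside the respective $S$'s — and the first-coordinate labels give $l_{G'_i}(v)\leq l_{G'_j}(\phi(v))$. Hence $G'_i$ is a labelled induced subgraph of $G'_j$. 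The reverse direction is symmetric since subgraph complementation is an involution (applying the same complementation to $G'_i$ recovers a graph in $\mathcal{G}$), so one shows $\{$complements of graphs in $\mathcal{G}'$ on the chosen set$\}$ well-quasi-ordered implies $\mathcal{G}$ well-quasi-ordered by the identical construction; combined with condition~(ii) of ``$(k,\gamma)$-obtained'' this closes the equivalence.

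For vertex deletion: if $\mathcal{G}'$ is $(1,\gamma)$-obtained from $\mathcal{G}$ by deleting one vertex, the ``only if'' direction is immediate since deletion only removes graphs' vertices and every induced subgraph of a member of $\mathcal{G}'$ is (an induced subgraph of) a member of $\mathcal{G}$ with one more vertex; more carefully, $\mathcal{G}'$ is contained in the hereditary closure of $\mathcal{G}$, and a hereditary class is labelled-well-quasi-ordered iff its subclasses are. For the ``if'' direction, take $G'_1,G'_2,\dots$ in $\mathcal{G}'$ where $G'_i=G_i\setminus\{x_i\}$ with $G_i\in\mathcal{G}$. The trick is that there are only finitely many ``types'' for how the lost vertex $x_i$ attaches: but $x_i$ can have unboundedly many neighbours, so instead we relabel each vertex $v$ of $G_i$ with the pair $(l_{G'_i}(v),\,c_i(v))$ where $c_i(v)\in\{0,1\}$ records whether $v\in N_{G_i}(x_i)$, keep the label of $x_i$ itself arbitrary (say a fixed minimum element adjoined to $W$, or simply use the trivial order on a singleton for its first coordinate), and use the product order as before. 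By hypothesis applied to $\mathcal{G}$ with the enlarged label set, for some $i<j$ we get a labelled embedding $\phi:G_i\hookrightarrow G_j$. Restricting $\phi$ to $V(G'_i)=V(G_i)\setminus\{x_i\}$: if $\phi(x_i)=x_j$ then $\phi$ restricted is already an embedding $G'_i\hookrightarrow G'_j$ respecting labels on the first coordinate; if $\phi(x_i)\neq x_j$, then $x_j$ may or may not be in the image — since $x_j$ is only one vertex we may simply discard it, i.e. $\phi$ restricted to $V(G'_i)$ lands in $V(G_j)\setminus\{x_j, \text{possibly }\phi(x_i)\}$, still inducing the same subgraph, giving the desired labelled embedding $G'_i\hookrightarrow G'_j$. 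The second coordinate was needed only to guarantee that $\phi$ can be chosen compatibly; one may need a small case analysis, or alternatively pass to a subsequence on which the behaviour of $\phi$ near $x_i$ is uniform.

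The main obstacle, and the part deserving the most care in the write-up, is making the vertex-deletion argument airtight: one must ensure that after restricting the embedding $\phi$ to the deleted-vertex graph the image is genuinely an induced subgraph with correctly ordered labels, handling the cases $\phi(x_i)=x_j$ and $\phi(x_i)\neq x_j$ and the possibility that $x_j\in\mathrm{im}(\phi)$. A clean way is to first pass to an infinite subsequence on which $\phi$ exists with a fixed pattern (using that there are finitely many patterns), but since $\phi$ is only produced after choosing the sequence, the right order is: enlarge labels, apply the well-quasi-order hypothesis to get $i<j$ and $\phi$, then argue that the restriction works for \emph{this} particular $\phi$ regardless of pattern — which it does, because deleting at most two vertices from the target never destroys an induced-subgraph embedding. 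For subgraph and bipartite complementation the only thing to be careful about is that the label-encoding of the partition classes forces $\phi$ to respect the classes, so no adjacency outside the complemented region is disturbed; this is routine once set up. Finally I would remark that the same proofs show these operations also preserve well-quasi-orderability by the ordinary induced subgraph relation, and (anticipating the next section) boundedness of uniformicity, but those are separate statements.
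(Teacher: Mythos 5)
Your treatment of subgraph complementation is essentially the paper's argument (marking membership in the complemented set by a $\{0,1\}$-component of the label, with equality required on that component), and reducing bipartite complementation to three subgraph complementations is also exactly what the paper does; those parts are fine. The genuine gap is in vertex deletion: the definition of ``preserves'' is an equivalence, so you must prove both ``$\mathcal{G}$ wqo $\Rightarrow$ $\mathcal{G}'$ wqo'' and ``$\mathcal{G}'$ wqo $\Rightarrow$ $\mathcal{G}$ wqo'', but both of your arguments (the hereditary-closure remark and the neighbourhood-bit relabelling with the restricted embedding) establish the \emph{same} implication, namely that deleting a vertex from each graph of a labelled-wqo class yields a labelled-wqo class. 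You never prove the converse: if the vertex-deleted class $\mathcal{G}'$ is labelled-wqo, then so is $\mathcal{G}$. This converse is not a formality --- it is the direction actually used later in the paper (``by deleting at most two vertices we reduce to a known wqo class, hence the original class is wqo''), and it does not follow by restricting an embedding, because the vertex being added back has arbitrary and unboundedly large neighbourhoods. The paper handles it by first applying a bipartite complementation between $z_G$ and $N(z_G)$ to make $z_G$ isolated (legitimate by the already-proved part~(ii)), so that each transformed graph is just a pair consisting of a labelled graph from $\mathcal{G}'$ and one extra labelled isolated vertex, and then invoking Higman's Lemma (product of well-quasi-orders) to conclude that these pairs are well-quasi-ordered; nothing in your proposal plays this role.

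Two smaller points. First, your justification for the direction you do prove is shakier than it looks: ``$\mathcal{G}'$ is contained in the hereditary closure of $\mathcal{G}$, and a hereditary class is labelled-wqo iff its subclasses are'' does not suffice, since $\mathcal{G}$ is not assumed hereditary; you need the statement that the hereditary closure of a labelled-wqo class is labelled-wqo, which is true but itself requires the same marking trick (or, as in the paper, one labels the deleted vertex with a fresh label incomparable to all others and lifts an antichain of $\mathcal{G}'$ to one of $\mathcal{G}$). Second, in your restricted-embedding argument the worry about $x_j$ lying in the image of $\phi$ is real and is only resolved if you fix the label of $x_i$ correctly (e.g.\ a label comparable to nothing else, forcing $\phi(x_i)=x_j$ and keeping $x_j$ out of the image of the other vertices); as written, ``keep the label of $x_i$ arbitrary'' and ``discard $x_j$'' do not yield an embedding into $G'_j$. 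These are repairable, but the missing implication described above must be added for the lemma to be proved.
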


\begin{proof}
We start by proving the lemma for subgraph complementations.

Let~$X$ be a class of graphs and let~$Y$ be a set of graphs obtained from~$X$ by applying a subgraph complementation to each graph in~$X$.
More precisely, for each graph $G \in X$ we choose a set~$Z_G$ of vertices in~$G$; we let~$G'$ be the graph obtained from~$G$ by applying a complementation with respect to the subgraph induced by~$Z_G$ and we let~$Y$ be the set of graphs~$G'$ obtained in this way.
Clearly it is sufficient to show that~$X$ is well-quasi-ordered by the labelled induced subgraph relation if and only if~$Y$ is.

Suppose that~$X$ is not well-quasi-ordered under the labelled induced subgraph relation.
Then there must be a well-quasi-order $(L,\leq)$ and an infinite sequence $G_1,G_2,\ldots$ of graphs
in~${\cal X}$
with vertices labelled with elements of~$L$, such that these graphs form an infinite antichain under the labelled induced subgraph relation.
Let $(L',\leq')$ be the quasi-order with $L' = \{(k,l): k \in \{0,1\},l \in L\}$
and $(k,l) \leq' (k',l')$ if and only if $k=k'$ and $l \leq l'$ (so~$L'$ is the disjoint union of two copies of~$L$, where elements of one copy are incomparable with elements of the other copy).
Note that $(L',\leq')$ is a well-quasi-order since $(L,\leq)$ is a well-quasi-order.

For each graph~$G_i$ in this sequence, with labelling~$l_i$, we construct the graph~$G_i'$ (recall that~$G_i'$ is obtained from~$G_i$ by applying a complementation on the vertex set~$Z_{G_i}$).
We label the vertices of~$V(G_i')$ with a labelling~$l_i'$ as follows:
\begin{itemize}
\item set $l_i'(v)=(1,l_i(v))$ if $v \in Z_{G_i}$ and
\item set $l_i'(v)=(0,l_i(v))$ otherwise.
\end{itemize}

We claim that when $G_1',G_2',\ldots$ are labelled in this way they form an infinite antichain with respect to the labelled induced subgraph relation.
Indeed, suppose for contradiction that~$G_i'$ is a labelled induced subgraph of~$G_j'$ for some $i\neq j$.
This means that there is a injective map $f : V(G_i') \to V(G_j')$ such that $l_i'(v) \leq' l_j'(f(v))$ for all $v \in V(G_i')$ and $v,w \in V(G_i')$ are adjacent in~$G_i'$ if and only if~$f(v)$ and~$f(w)$ are adjacent in~$G_j'$.
Now since $l_i'(v) \leq' l_j'(f(v))$ for all $v \in V(G_i')$, by the definition of~$\leq'$ we conclude the following:
\begin{itemize}
\item $l_i(v) \leq l_j(f(v))$ for all $v \in V(G_i')$ and
\item $v \in Z_{G_i}$ if and only if $f(v) \in Z_{G_j}$.
\end{itemize}

Suppose $v,w \in V(G_i)$ with $w \notin Z_{G_i}$ ($v$ may or may not belong to $Z_{G_i}$) and note that this implies $f(w) \notin Z_{G_j}$.
Then~$v$ and~$w$ are adjacent in~$G_i$ if and only if~$v$ and~$w$ are adjacent in~$G_i'$ if and only if~$f(v)$ and~$f(w)$ are adjacent in~$G_j'$ if and only if~$f(v)$ and~$f(w)$ are adjacent in~$G_j$.

Next suppose $v,w \in Z_{G_i}$, in which case $f(v),f(w) \in Z_{G_j}$.
Then~$v$ and~$w$ are adjacent in~$G_i$ if and only if~$v$ and~$w$ are non-adjacent in~$G_i'$ if and only if~$f(v)$ and~$f(w)$ are non-adjacent in~$G_j'$ if and only if~$f(v)$ and~$f(w)$ are adjacent in~$G_j$.

It follows that~$f$ is an injective map $f : V(G_i) \to V(G_j)$ such that $l_i(v) \leq l_j(f(v))$ for all $v \in V(G_i)$ and $v,w \in V(G_i)$ are adjacent in~$G_i$ if and only if~$f(v)$ and~$f(w)$ are adjacent in~$G_j$.
In other words~$G_i$ is a labelled induced subgraph of~$G_j$.
This contradiction means that if $G_1,G_2,\ldots$ is an infinite antichain then $G_1',G_2',\ldots$ must also be an infinite antichain.

Therefore, if the class~$X$ is not well-quasi-ordered by the labelled induced subgraph relation then neither is~$Y$.
Repeating the argument with the roles of $G_1,G_2,\ldots$ and $G_1',G_2',\ldots$ reversed shows that if~$Y$ is not well-quasi-ordered under the labelled induced subgraph relation then neither is~$X$.
This completes the proof for subgraph complementations.

\bigskip
\smallskip
\indent
Since a bipartite complementation is equivalent to doing three subgraph complementations one after another, the result for bipartite complementations follows.

\bigskip
\smallskip
Finally, we 
prove the result for vertex deletions.

Let~$X$ be a class of graphs and let~$Y$ be a set of graphs obtained from~$X$ by deleting exactly one vertex~$z_G$ from each graph~$G$ in~$X$.
We denote the obtained graph by $G-z_G$.
Clearly it is sufficient to show that~$X$ is well-quasi-ordered by the labelled induced subgraph relation if and only if~$Y$ is.

Suppose that~$Y$ is well-quasi-ordered by the labelled induced subgraph relation.
We will show that~$X$ is also well-quasi-ordered by this relation.
For each graph $G \in X$, let~$G'$ be the graph obtained from~$G$ by applying a bipartite complementation between~$\{z_G\}$ and $N(z_G)$, so~$z_G$ is an isolated vertex in~$G'$.
Let~$Z$ be the set of graphs obtained in this way.
By Lemma~\ref{lem:bipartite-complementation}, $Z$ is well-quasi-ordered by the labelled induced subgraph relation if and only if~$X$ is.
Suppose $G_1,G_2$ are graphs in~$Z$ with vertices labelled from some well-quasi-order $(L,\leq)$.
Then for $i \in \{1,2\}$ the vertex~$z_{G_i}$ has a label from~$L$ and the graph $G_i - z_{G_i}$ belongs to~$Y$.
Furthermore if $G_1 - z_{G_1}$ is a labelled induced subgraph of $G_2 - z_{G_2}$ and $l_{G_1}(z_{G_1}) \leq l_{G_2}(z_{G_2})$ then~$G_1$ is a labelled induced subgraph of~$G_2$.
Now by Lemma~\ref{lem:higman} it follows that~$Z$ is well-quasi-ordered by the labelled induced subgraph relation.
Therefore~$X$ is also well-quasi-ordered by this relation.

Now suppose that~$Y$ is not well-quasi-ordered by the labelled induced subgraph relation.
Then~$Y$ contains an infinite antichain $G_1,G_2,\ldots$ with the vertices of~$G_i$ labelled by functions~$l_i$ which takes values in some well-quasi-order $(L,\leq)$.
For each~$G_i$, let~$G_i'$ be a corresponding graph in~$X$, so $G_i=G_i' - z_{G_i'}$.
Then in~$G_i'$ we label~$z_{G_i'}$ with a new label~$*$ and label all other vertices $v \in V(G_i')$ with the same label as that used in~$G_i$.
We make this new label~$*$ incomparable to all the other labels in~$L$ and note that the obtained quasi order $(L \cup \{*\},\leq)$ is also a well-quasi-order.
It follows that $G_1',G_2',\ldots$ is an antichain in~$X$ when labelled in this way.
Therefore, if~$Y$ is not well-quasi-ordered by the labelled induced subgraph relation then neither is~$X$.
This completes the proof.\qed
\end{proof}

Note that the above lemmas only apply to well-quasi-ordering with respect to the {\em labelled} induced subgraph relation.
Indeed, if we take a cycle and delete a vertex, complement the subgraph induced by an edge or apply a bipartite complementation to two adjacent vertices, we obtain  a path.
However, while the set of cycles is an infinite antichain with respect to the induced subgraph relation, the set of paths is not.

\subsection{Operations on $k$-Uniform Graphs}

We now show that our graph operations do not change uniformicity by ``too much'' either. The result for vertex deletion was proved by Korpelainen and Lozin.

\begin{lemma}
Let~$G$ be a graph of uniformicity~$k$.
Let $G'$, $G''$ and $G'''$ be graphs obtained from~$G$ by applying one vertex deletion, subgraph complementation or bipartite complementation, respectively.
Let $\ell'$, $\ell''$ and~$\ell'''$ be the uniformicities of $G'$, $G''$ and~$G'''$, respectively.
Then the following three statements hold:
\begin{enumerate}[(i)]
\renewcommand{\theenumi}{\thelemma.(\roman{enumi})}
\renewcommand{\labelenumi}{(\roman{enumi})}
\item \label{lem:uniform-k-del-vert}$\ell' < k < 2\ell'+1$~\emph{\cite{KL11}};
\item \label{lem:k-uniform-comp}$\frac{k}{2} \leq \ell'' \leq 2k$;
\item \label{lem:k-uniform-bip}$\frac{k}{3} \leq \ell''' \leq 3k$.
\end{enumerate}
\end{lemma}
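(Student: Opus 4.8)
The plan is to handle each of the three inequalities by exhibiting explicit uniform representations. Recall that a graph $G$ is $k$-uniform precisely when there is a matrix $K$ of order $k$ and a "pattern" graph $F_k$ on $\{1,\dots,k\}$ with $G \in {\cal P}(K,F_k)$, i.e. $G$ is an induced subgraph of $H(K)$, where $H(K)$ is built from infinitely many copies of $F_k$ by complementing inside each block $V_i$ whenever $K(i,i)=1$ and bipartite-complementing between $V_i,V_j$ whenever $K(i,j)=1$. The key observation is that if $G$ is obtained from $G^\dagger$ by one of our operations, then a uniform representation of $G^\dagger$ can be "massaged" into one of $G$ by splitting blocks and correspondingly enlarging $K$ and $F_k$; conversely the operation can often be undone at the level of $H(K)$.

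For the lower bounds I would argue as follows. Vertex deletion: since $G$ is obtained from $G'$ by adding one vertex, and adding a vertex is a vertex deletion in reverse, the inequality $\ell' < k$ is one half of (i) (cited from~\cite{KL11}); the upper bound $k < 2\ell'+1$, i.e. $k \le 2\ell'$, follows from the construction below applied symmetrically, since $G'$ is itself obtained from $G$ by a vertex deletion. Subgraph complementation and bipartite complementation are their own inverses (a bipartite complementation between $X$ and $Y$ undoes itself), so in (ii) and (iii) it suffices to prove only the upper bounds $\ell'' \le 2k$ and $\ell''' \le 3k$, since applying the same bound to the inverse operation yields $k \le 2\ell''$ and $k \le 3\ell'''$, which rearrange to the stated lower bounds. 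So the whole lemma reduces to: a subgraph complementation at most doubles uniformicity, and a bipartite complementation at most triples it.

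For the upper bound in (ii): let $G \in {\cal P}(K,F_k)$ and let $S \subseteq V(G)$ be the set we complement to obtain $G''$. Since $G$ is an induced subgraph of $H(K)$, each vertex of $G$ lies in some block $V_i$; partition each block $V_i$ into $V_i^S$ (vertices of $G$ lying in $S$) and $V_i^{\bar S}$ (the rest), giving at most $2k$ blocks. Build a new pattern graph $F_{2k}$ on these $2k$ indices recording the adjacencies among the corresponding representative vertices of a single copy, and a new matrix $K''$ of order $2k$ which agrees with $K$ on the inherited block-pair entries, except that we add $1$ (mod the complementation bookkeeping) to the diagonal entry of each "$S$-block" and to every off-diagonal entry between two $S$-blocks — exactly the entries toggled by complementing $S$. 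One then checks that the adjacency rule of $H(K'')$ reproduces precisely the adjacencies of $G''$, so $G'' \in {\cal P}(K'',F_{2k})$ and $\ell'' \le 2k$. The main obstacle — and the one place care is needed — is the bookkeeping: one must verify that splitting a block and then adjusting diagonal/cross entries is consistent with the definition (the complementation inside $V_i$ in $H(K)$ does not "see" the split, so the induced adjacencies between $V_i^S$ and $V_i^{\bar S}$ must be encoded via a fresh cross-entry in $K''$ together with the correct edges in $F_{2k}$). For (iii): a bipartite complementation between $X$ and $Y$ equals three subgraph complementations (w.r.t.\ $X$, $Y$, $X\cup Y$), but iterating (ii) would give a factor $8$; instead one proceeds directly, splitting each block $V_i$ into $V_i \cap X$, $V_i \cap Y$, and the rest — at most $3k$ blocks — and toggling in the new matrix exactly the cross-entries between an "$X$-part" and a "$Y$-part". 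The same consistency check as in (ii) then gives $\ell''' \le 3k$, and hence by the inverse-operation argument $\tfrac{k}{3} \le \ell'''$.
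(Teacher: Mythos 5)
Your plan follows essentially the same route as the paper: the lower bounds in (ii) and (iii) are deduced from the upper bounds because the operations are self-inverse, and the upper bounds come from splitting/doubling the blocks and toggling the corresponding entries of the matrix (the paper phrases this as giving every vertex of $F_k$ a twin and complementing on the set of twins, which yields exactly your $K''$ with the diagonal and $S$--$S$ entries flipped), together with a direct $3k$-block construction for bipartite complementation to avoid the factor~$8$. The one slip is your claim that the second inequality of (i) ``follows from the construction below applied symmetrically'' --- adding a vertex is not a subgraph complementation, so this does not follow --- but since (i) is simply cited from~\cite{KL11} (as the paper also does), this does not affect the result.
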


\begin{proof}
The first inequality of Part~(i) is trivial. The second inequality of Part~(i) was proved in~\cite{KL11}.
We prove Parts~(ii) and~(iii) below.

\medskip
\noindent
Let~$G$ be a graph of uniformicity~$k$, let~$X$ be a set of vertices in~$G$ and let~$G''$ be the graph obtained from~$G$ by applying a complementation with respect to the subgraph induced by~$X$.
Let~$\ell''$ be the uniformicity of~$G''$.
By symmetry, to prove Part~(ii), it is sufficient to prove that $\ell'' \leq 2k$.

Since~$G$ is a $k$-uniform graph, it must belong to $P(F_k,K)$ for some~$F_k$ and some~$K$, so it is an induced subgraph of~$H(K)$.

Consider the graph obtained from~$F_k$ by replacing each vertex~$v$ of~$F_k$ by two non-adjacent vertices~$v$ and~$v'$.
Apply a complementation with respect to $\{1',2',\ldots,k'\}$ and let~$F_k'$ be the obtained graph.
In other words, if $v \in V(F_k)$ then
\begin{align*}
N_{F_k'}(v) &= N_{F_k}(v) \cup \{w' \;|\; w \in N_{F_k}(v)\}\; \textrm{and}\\
N_{F_k'}(v') &= N_{F_k}(v) \cup \{w' \;|\; w \in V(F_k) \setminus N_{F_k}[v]\}. 
\end{align*}

Let~$K'$ be a $2k \times 2k$ matrix indexed by $\{1,2,\ldots,k,1',2',\ldots,k'\}$ with $K_{i,j}'=K_{i,j'}'=K_{i',j}'=1-K_{i',j'}'=K_{i,j}$.

This means that~$H(K')$ is formed from~$H(K)$ by adding a copy of each vertex that has the same neighbourhood and then applying a complementation with respect to the set of newly-created vertices.

Similarly, $G''$ can be obtained from~$G$ by replacing each vertex in~$X$ by a copy with the same neighbourhood and then applying a complementation with respect to the set of newly-created vertices.
Therefore~$G''$ is an induced subgraph of~$H(K')$.
Therefore~$G''$ is a $2k$-uniform graph.

\medskip
\noindent
Part~(iii) follows from similar arguments to those for Part~(ii).
(Also note that if we weaken the bounds in Part~(iii) to $\frac{k}{8} \leq \ell''' \leq 8k$ then the result follows immediately from combining Part~(ii) with the fact that a bipartite complementation is equivalent to a sequence of three subgraph complementations.)\qed
\end{proof}

\section{A New Well-Quasi-Ordered Class}\label{s-yes}

In this section we show that $(\overline{2P_1+P_2},P_2+\nobreak P_3)$-free graphs are well-quasi-ordered by the labelled induced subgraph relation.
We divide the proof into several sections, depending on whether or not the graphs under consideration contain certain induced subgraphs or not.
We follow the general scheme that Dabrowski, Huang and Paulusma~\cite{DHP0} used to prove that this class has bounded clique-width, but we will also need a number of new arguments.

\subsection{Graphs containing a~$K_5$}

We first consider graphs containing a~$K_5$ and prove the following lemma.

\begin{lemma}\label{lem:diamond-P_2+P_3-free-K5-non-free}
The class of $(\overline{2P_1+P_2},P_2+\nobreak P_3)$-free graphs that contain a~$K_5$ is well-quasi-ordered by the labelled induced subgraph relation.
\end{lemma}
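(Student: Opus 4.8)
The plan is to exploit the very restrictive structure imposed by containing a $K_5$ together with $\overline{2P_1+P_2}$-freeness. Note that $\overline{2P_1+P_2}$ is the graph on four vertices $\{a,b,d,e\}$ with edges $ea,ab,eb,db$ (a triangle $abe$ with a pendant edge $bd$), so being $\overline{2P_1+P_2}$-free is equivalent to saying: there is no induced subgraph consisting of a triangle plus one extra vertex adjacent to exactly one vertex of the triangle. In particular, if $T$ is any triangle in $G$ and $v\notin V(T)$, then $v$ is adjacent to $0$, $2$, or $3$ vertices of $T$. First I would fix a maximal clique $C$ with $|C|\ge 5$ and partition $V(G)\setminus C$ according to the neighbourhood in $C$. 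The key observation is that, because every pair of vertices of $C$ lies in a triangle inside $C$ (here $|C|\ge 3$ suffices), a vertex outside $C$ that is adjacent to some but not all of $C$ must, for every such triangle, hit $2$ or $3$ of its vertices; a short counting argument (choosing triangles cleverly inside the large clique $C$) should force the non-neighbourhood of such a vertex in $C$ to have size at most a constant — in fact I expect each $v\notin C$ is either complete to $C$, or anti-complete to $C$, or misses exactly one vertex of $C$ (and $|C|$ is then necessarily small, say $\le 5$, or else even ``misses one'' is excluded). This chops $V(G)$ into a bounded number of parts: $C$ itself, the set $D$ of vertices complete to $C$, the set $A$ of vertices anti-complete to $C$, and for each $c\in C$ a set $B_c$ of vertices whose non-neighbour in $C$ is exactly $c$.

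Next I would analyse the interaction of $A$ with $C$ and $D$. If $a\in A$ has two neighbours $x,y$ with $x\in C$... wait, $a$ has no neighbour in $C$; instead consider $a\in A$ together with an edge inside $D\cup C$: using $\overline{2P_1+P_2}$-freeness on triangles meeting $C$ one shows that $A$ is anti-complete to $D$ as well, or else a forbidden configuration appears. Then, crucially, $P_2+P_3$-freeness kicks in: $C$ contains an induced $P_2$ (indeed a $K_5$), so $G[A]$ can contain no induced $P_3$, i.e.\ $G[A]$ is a disjoint union of cliques, and moreover there are further restrictions (an edge of $A$ together with a non-edge-free configuration near $C$ would give $P_2+P_3$). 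Similarly $G[D]$ and the sets $B_c$ are severely constrained by $P_2+P_3$-freeness combined with the fact that we already have large cliques and edges available near $C$; I expect to conclude that all of $D$, all of $B_c$, and essentially all the edges incident to $C$ are controlled, so that after deleting a bounded number of vertices and applying a bounded number of bipartite complementations (between $A$ and $C\cup D$, between the $B_c$'s and $C$, etc.) the graph decomposes into: a bounded-size part, a disjoint union of cliques (the $A$-part), and a low-complexity remainder.

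To finish, I would invoke the machinery from Section~\ref{s-permitted}: by Lemma~\ref{lem:lwqo-operations}, vertex deletion, subgraph complementation and bipartite complementation all preserve well-quasi-orderability by the labelled induced subgraph relation, and by Lemma~\ref{lem:prime} it suffices to handle connected (or prime) graphs, which rules out the troublesome disjoint-union-of-cliques behaviour. So the target after the reductions is a class that is either of bounded size, or a subclass of $(P_k,K_\ell,K_{m,m})$-free graphs for suitable constants (the large clique bounds $\ell$ via maximality considerations on the cleaned-up graph, $P_2+P_3$-freeness bounds $P_k$, and bipartite-complementation-cleaned parts bound $K_{m,m}$), and then Lemma~\ref{lem:Pk-Kl-Kmm-wqo} or Theorem~\ref{thm:uniform} applies. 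The main obstacle I anticipate is the case analysis for the sets $B_c$ and their mutual adjacencies and adjacencies to $D$: there $\overline{2P_1+P_2}$-freeness and $P_2+P_3$-freeness must be combined simultaneously, and one has to be careful that the number of bipartite complementations used stays bounded by an absolute constant independent of $G$ (the number of parts $B_c$ is at most $|C|\le 5$, which is what makes this feasible). A secondary subtlety is handling vertices of $B_c$ that may themselves be non-adjacent to large sub-cliques — one needs the $K_5$ (rather than merely $K_3$) to win the relevant $P_2+P_3$ arguments, which is exactly why this lemma is stated for $K_5$-containing graphs and split off from the rest of the proof.
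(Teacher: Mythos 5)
There is a genuine error at the very first step: you have misidentified the forbidden graph. $\overline{2P_1+P_2}$ is the \emph{diamond} ($K_4$ minus one edge; in Figure~1 the edge $ed$ is present and only $ad$ is missing), not the paw (triangle plus a pendant vertex) as you describe. Consequently, $\overline{2P_1+P_2}$-freeness says that for any triangle $T$ and any vertex $v\notin V(T)$, $v$ has $0$, $1$ or $3$ neighbours in $T$ --- exactly the opposite of your claim that $v$ must hit $2$ or $3$ vertices. This reverses the whole structural picture around the maximal clique $C$ with $|C|\ge 5$: if $v\notin C$ had two neighbours $x,y\in C$, then by maximality $v$ has a non-neighbour $z\in C$ and $G[x,y,z,v]$ is a diamond. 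Hence every vertex outside $C$ has \emph{at most one} neighbour in $C$; your sets $D$ (complete to $C$) and $B_c$ (missing exactly one vertex of $C$) are empty, and the case your trichotomy excludes (exactly one neighbour in $C$) is the only nontrivial one that occurs. All of the subsequent analysis of $D$, the sets $B_c$, and their interactions is therefore built on a false premise and cannot be carried out as written.

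For comparison, the paper's proof starts precisely from the correct observation above, then uses $(P_2+P_3)$-freeness (with $|C|\ge 5$ supplying the spare $P_2$ inside $C$) to show that $G\setminus C$ is $P_3$-free, i.e.\ a disjoint union of cliques, plus a claim bounding how many vertices of $C$ can have neighbours outside once two or more of these outside cliques are large. A short case analysis on the number of large outside cliques then reduces, after deleting at most two vertices and/or one subgraph complementation, to $P_6$-free bipartite graphs or $P_3$-free graphs, and Lemmas~\ref{lem:p6-bip-wqo}, \ref{lem:prime} and~\ref{lem:lwqo-operations} finish the argument. Your intended endgame (reduce via the permitted operations to a class covered by Lemma~\ref{lem:Pk-Kl-Kmm-wqo} or Theorem~\ref{thm:uniform}) is in the right spirit, but even there the details are left as expectations (``I expect to conclude'', ``should force''), so the proposal would need both the corrected structure theorem and a concrete case analysis to become a proof.
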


\begin{proof}
Let~$G$ be a $(\overline{2P_1+P_2},P_2+P_3)$-free graph.
Let~$X$ be a maximal (by set inclusion) clique in~$G$ containing at least five
vertices. 

\medskip
\clm{\em\label{clm:at-most-one-nbr-in-X}Every vertex not in~$X$ has at most one neighbour in~$X$.}
This follows from the fact that~$X$ is maximal and~$G$ is $\overline{2P_1+P_2}$-free.

\medskip
\noindent
Suppose there is a~$P_3$ in $G \setminus X$, say on vertices $x_1,x_2,x_3$ in
that order. Since $|X| \geq 5$, we can find $y_1,y_2 \in X$ none of which are
adjacent to any of $x_1,x_2,x_3$. Then $G[y_1,y_2,x_1,x_2,x_3]$ is a $P_2+\nobreak P_3$.
This contradiction shows that $G \setminus X$ is $P_3$-free and must therefore be a union of disjoint
cliques $X_1,\ldots,X_k$.
We say that a clique~$X_i$ is {\em large} if it contains at least two vertices and that it is {\em small} if it contains exactly one vertex.

\medskip
\clm{\em\label{clm:nbrs-in-large-Xj}If $x \in X$ is adjacent to $y \in X_i$ and~$X_j$ (with $i \neq j$) is large, then~$x$ can have at most one non-neighbour in~$X_j$.}
For contradiction, assume that~$x$ is non-adjacent to $z_1,z_2 \in
X_j$. Since $|X| \geq 5$ and each vertex that is not in~$X$
has at most one neighbour in~$X$, there must be a vertex $x' \in X$ that is
non-adjacent to $y,z_1$ and~$z_2$. Then $G[z_1,z_2,x',x,y]$ is a $P_2+P_3$, a contradiction.

\medskip
\noindent
We consider several cases:

\thmcase{\label{case:1-clique}$G \setminus X$ contains at most one clique.}
Then the complement of~$G$ is a bipartite graph. Moreover, since the complement of $\overline{2P_1+P_2}$
is an induced subgraph of~$P_6$, we conclude that the complements of graphs in our class form a subclass of $P_6$-free bipartite graphs, which are well-quasi-ordered by the labelled 
induced subgraph relation by Lemmas~\ref{lem:p6-bip-wqo} and~\ref{lem:subgraph-complementation}.

\thmcase{\label{case:0-large}$G \setminus X$ does not contain large cliques.}
In this case, the structure of graphs can be described as follows: take a
collection of stars and create a clique on their central vertices and then add a number (possibly zero) of isolated vertices.
In other words, applying subgraph complementation once (to the clique~$X$), we obtain a graph which is  a disjoint union of stars and isolated vertices.
Clearly, a graph every connected component of which is a star or an isolated vertex is $P_6$-free bipartite and hence by Lemmas~\ref{lem:p6-bip-wqo} and~\ref{lem:subgraph-complementation}
we conclude that graphs in our class are well-quasi-ordered by the labelled induced subgraph relation. 

\thmcase{\label{case:1-large}$G \setminus X$ contains exactly one large clique.}
Without loss of generality, assume that~$X_1$ is large and the remaining cliques $X_2,\ldots,X_k$ are small.
Suppose there are~$\ell$ distinct vertices $x_1,\ldots,x_\ell \in X$, each of which has a neighbour in $X_2 \cup \cdots \cup X_k$.
By Claim~\ref{clm:nbrs-in-large-Xj}, each of these vertices has at most one non-neighbour in~$X_1$.
But then $\ell\le 2$, since otherwise a vertex of~$X_1$ has more than one neighbour in~$X$. 

Therefore, by deleting at most two vertices from~$G$ we transform it to a graph from Case~\ref{case:1-clique} plus a number of isolated vertices. 
Lemma~\ref{lem:prime} allows us to ignore isolated vertices, while Lemma~\ref{lem:adding-vertices} allows the deletion of finitely many vertices.
Therefore, in Case~\ref{case:1-large} we deal with a set of graphs which is well-quasi-ordered by the labelled induced subgraph relation. 

\thmcase{\label{case:at-least-2-large}$G \setminus X$ contains at least two cliques that are large.}
Suppose there is a vertex $x \in X$ that has a neighbour outside of~$X$. 
By Claim~\ref{clm:nbrs-in-large-Xj}, $x$ has at most one non-neighbour in each large clique. 
Therefore, at most two vertices of~$X$ have neighbours outside of~$X$, since otherwise each large clique would have a vertex with more than one neighbour in~$X$.
But then by deleting at most two vertices we transform~$G$ into a $P_3$-free graph (i.e. a graph every connected component of which is a clique).
It is well-known (and also follows from Lemma~\ref{lem:prime}) that the set of $P_3$-free graphs is well-quasi-ordered by the labelled induced subgraph relation.
Therefore, by Lemma~\ref{lem:adding-vertices}, the same is true for graphs in Case~\ref{case:at-least-2-large}.
\qed
\end{proof}

\subsection{Graphs containing a~$C_5$}
\noindent
By Lemma~\ref{lem:diamond-P_2+P_3-free-K5-non-free}, we may restrict ourselves to looking at $K_5$-free graphs in our class.
We now consider the case where these graphs have an induced $C_5$.

\begin{lemma}\label{lem:diamond-P_2+P_3-free-C5-non-free}
The class of $(\overline{2P_1+P_2},P_2+\nobreak P_3,K_5)$-free graphs that contain an induced~$C_5$ has bounded uniformicity.
\end{lemma}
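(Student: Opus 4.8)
The plan is to analyze the structure of a $(\overline{2P_1+P_2},P_2+P_3,K_5)$-free graph $G$ that contains an induced $C_5$, say on vertices $c_1,\dots,c_5$ in cyclic order, and show that after deleting a bounded number of vertices and possibly applying a bounded number of subgraph/bipartite complementations, every remaining vertex falls into one of boundedly many "types" relative to a fixed finite gadget, which is exactly what $k$-uniformity for bounded $k$ requires. The key leverage is that $\overline{2P_1+P_2}$-freeness forbids, for any edge $uv$, two common non-neighbours of $u$ and $v$ that are themselves non-adjacent; equivalently the non-neighbourhood of any edge is a clique. Combined with $K_5$-freeness this will force the neighbourhoods of the $C_5$-vertices to interact with the rest of $G$ in a highly constrained way.

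First I would partition $V(G)\setminus\{c_1,\dots,c_5\}$ according to which of the $c_i$ each vertex is adjacent to; a priori there are $2^5$ classes, but $P_2+P_3$-freeness (the $C_5$ contains an induced $P_3$, leaving a $P_2$ available among the other two consecutive vertices) together with $\overline{2P_1+P_2}$-freeness should eliminate most adjacency patterns, leaving only a constant number of nonempty classes — the "standard" ones being vertices adjacent to a single $c_i$, to two consecutive $c_i$, to two non-consecutive $c_i$, to all five (dominating), and to none (anti-complete to the $C_5$). The anti-complete class is the dangerous one: I would argue it is $P_3$-free — a $P_3$ there, together with a non-adjacent edge of the $C_5$, gives a $P_2+P_3$ — so it is a disjoint union of cliques, and by $K_5$-freeness each clique has at most $4$ vertices, and then edges of the $C_5$ interact with those cliques in a controlled way. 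For the other classes, within each class the internal structure and its attachment to $C_5$ and to the other classes is governed by the same two forbidden subgraphs; I expect each class to be "almost uniform" — a module-like object, or a bounded union of such, after a bounded complementation.

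Next I would clean up: delete the (boundedly many) exceptional vertices and the $C_5$ itself, apply the handful of complementations needed to turn each class into something homogeneous (e.g. turning a co-clique-like class into a clique as in the $K_5$ case, or flattening a near-biclique between two classes), and check via Lemma on $k$-uniformity's definition that what remains lies in some $\mathcal{P}(K,F_k)$ with $k$ bounded — i.e. the vertex set splits into boundedly many parts, adjacency within and between parts is "copy-of-a-fixed-pattern-possibly-complemented", and we can absorb the leftover finite graph into $F_k$. Then boundedness of uniformicity of $G$ itself follows from the earlier lemma that vertex deletion, subgraph complementation and bipartite complementation each change uniformicity by only a bounded factor, applied a bounded number of times.

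The hard part will be the case analysis establishing that each adjacency class has bounded "uniformicity contribution" — in particular ruling out the possibility that two classes are joined by an arbitrarily complicated bipartite graph, or that the anti-complete cliques attach to the neighbourhoods of the $C_5$ in an unbounded number of distinct ways. The two forbidden induced subgraphs are quite restrictive ($\overline{2P_1+P_2}$ has only four vertices, so it bites often), so I expect that for every pair of classes the bipartite graph between them is, up to bipartite complementation, a disjoint union of bicliques with a bounded number of parts — but pinning this down cleanly, and organizing the finitely many cases so the bounds on $k$ stay uniform, is where the real work lies. I would lean on Claim-\ref{clm:at-most-one-nbr-in-X}-style observations (maximal cliques of size $\geq 3$ force at-most-one-neighbour behaviour via $\overline{2P_1+P_2}$-freeness) repeatedly, since the $C_5$ supplies many triangles.
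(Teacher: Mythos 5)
There is a genuine gap, and it sits exactly where you say "the real work lies." Your plan correctly starts the way the paper does (classify vertices by their neighbourhood on the $C_5$, observe that vertices with two consecutive neighbours on the cycle form cliques of size at most~$2$ by $\overline{2P_1+P_2}$- and $K_5$-freeness, delete the boundedly many exceptional vertices, and finish with the operation lemmas), but it defers the entire structural core and, more importantly, the one concrete structural prediction you do make is wrong for this class. You expect that between any two of the remaining classes the bipartite graph is, up to bipartite complementation, a disjoint union of bicliques with a bounded number of parts. That is not what happens: with $V_i$ the set of vertices adjacent exactly to $v_{i-1},v_{i+1}$ and $X$ the set of vertices with no neighbour on the cycle, the forbidden subgraphs force $G[V_i\cup X]$ and $G[V_i\cup V_j]$ (for opposite $i,j$) to be \emph{matchings} and $G[V_i\cup V_j]$ (for consecutive $i,j$) to be \emph{co-matchings}. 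An arbitrarily large induced matching between two large sets is not a bounded union of homogeneous pieces in your sense, and bipartite complementation does not make it one; such structure can only be certified as low-uniformicity by exhibiting it as the "same copy of $F_k$" relation in a single consistent decomposition into copies. Your plan contains no mechanism for aligning the matchings/co-matchings across the different pairs $(V_i,V_j)$ and $(V_i,X)$ into one such decomposition.

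That alignment is precisely what the paper's proof supplies: it shows that large sets (at least three vertices) force rigid behaviour (if $V_i$ is large then $X$ is anti-complete to $V_{i\pm 2}$ and $V_{i-1}$ is anti-complete to $V_{i+1}$; three consecutive large sets are pairwise complete where adjacent), proves one additional claim about non-adjacent pairs across two consecutive large sets being "seen uniformly" by $X\cup V_{i+3}$, and then runs a case analysis on which of $V_1,\dots,V_5$ are large. In most cases the relations collapse to complete/anti-complete plus at most one matching, giving uniformicity at most~$6$; in the genuinely hard case (two consecutive large sets, or three with two consecutive) the extra claim is what licenses a single bipartite complementation after which every component is an induced subgraph of the paw, giving a $4$-uniform graph and hence, via Lemma~\ref{lem:k-uniform-bip}, a bound of~$13$ overall. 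Your other small slips (the set $X$ is in fact independent, not merely $P_3$-free; vertices with exactly one neighbour on the cycle number at most one per cycle vertex and are deleted rather than kept as a class) are harmless, but without the matching/co-matching claims, the large-set rigidity claims, and the final case analysis, the proposal does not yet constitute a proof.
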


\begin{proof}
To prove this, we modify the proof from~\cite{DHP0}, which shows that this class of graphs has bounded clique-width.
Let~$G$ be a $(\overline{2P_1+P_2},P_2+P_3,K_5)$-free graph containing a~$C_5$, say on vertices $v_1,v_2,v_3,v_4,v_5$ in order.
Our goal is to show that the graph~$G$ has bounded uniformicity and hence, by Lemma~\ref{lem:uniform-k-del-vert}, in the proof we can neglect any set of vertices that is bounded in size by a constant.

Let~$Y$ be the set of vertices adjacent to~$v_1$ and~$v_2$ (and possibly to other vertices on the cycle).
If $y_1,y_2 \in Y$ are non-adjacent, then $G[v_1,v_2,y_1,y_2]$ would be a $\overline{2P_1+P_2}$. Therefore, $Y$ is a clique.
This clique has at most two vertices, since otherwise three vertices of~$Y$ together with~$v_1$ and~$v_2$ would create a~$K_5$. 
Therefore, the set of all vertices with two consecutive neighbours on the cycle is finite and hence can be neglected (removed from the graph).
We may therefore assume that each vertex not on the cycle has at most two neighbours on the cycle and if it has two such neighbours, they must be non-consecutive vertices of the cycle.

Now let~$W$ be the set of vertices whose unique neighbour on the cycle is~$v_1$.
If $y_1,y_2 \in W$ are non-adjacent, then $G[v_3,v_4,y_1,v_1,y_2]$ would be a $P_2+\nobreak P_3$.
If $y_1,y_2 \in W$ are adjacent, then $G[y_1,y_2,v_2,v_3,v_4]$ would be a $P_2+\nobreak P_3$.
Therefore,~$W$ contains at most one vertex, and hence the set of all vertices with exactly one neighbour 
on the cycle can be removed from the graph.

Let~$X$ be the set of vertices with no neighbours on the cycle. 
$X$ must be an independent set, since if two vertices in $x_1,x_2 \in X$
are adjacent, then $G[x_1,x_2,v_1,v_2,v_3]$ would induce a $P_2+P_3$ in~$G$.

For $i \in \{1,2,3,4,5\}$, let~$V_i$ be the
set of vertices not on the cycle that are adjacent to~$v_{i-1}$ and~$v_{i+1}$,
but non-adjacent to all other vertices of the cycle (subscripts on vertices and vertex sets are interpreted
modulo~$5$ throughout this proof). For each~$i$, the set~$V_i$ must be independent, since if $x,y \in V_i$ are adjacent
then $G[x,y,v_{i-1},v_{i+1}]$ is a $\overline{2P_1+P_2}$.

We say that two sets~$V_i$ and~$V_j$
are \emph{consecutive} (respectively \emph{opposite}) if~$v_i$ and~$v_j$ are
distinct adjacent (respectively non-adjacent) vertices of the cycle.
We say that a set~$X$ or~$V_i$ is \emph{large} if it contains at least three
vertices, otherwise it is \emph{small}.
We say that a bipartite graph with bipartition classes~$A$ and~$B$ is a {\em matching} ({\em co-matching})
if every vertex in~$A$ has at most one neighbour (non-neighbour) in~$B$, and vice versa.

Dabrowski, Huang and Paulusma proved the following claims about the edges between these sets (see Appendix~\ref{app:c5} for a proof).

\medskip
\clm{\em(\cite{DHP0}) $G[V_i \cup X]$ is a matching.}
\clm{\em(\cite{DHP0}) If~$V_i$ and~$V_j$ are opposite, then $G[V_i\cup V_j]$ is a matching.}
\clm{\em(\cite{DHP0}) If~$V_i$ and~$V_j$ are consecutive, then $G[V_i \cup V_j]$ is a co-matching.}
\clm{\em(\cite{DHP0}) If~$V_i$ is large, then~$X$ is anti-complete to $V_{i-2} \cup V_{i+2}$.}
\clm{\em(\cite{DHP0}) If~$V_i$ is large, then~$V_{i-1}$ is anti-complete to~$V_{i+1}$.}
\clm{\em(\cite{DHP0}) If $V_{i-1},V_{i},V_{i+1}$ are large, then~$V_i$ is complete to
$V_{i-1} \cup V_{i+1}$.}

We also prove the following claim:

\medskip
\clm{\em\label{claim:last}Suppose two consecutive sets~$V_i$ and~$V_{i+1}$ are large and a vertex $y\in V_i$ is not adjacent to a vertex $z\in V_{i+1}$.
Then every vertex $x\in X\cup V_{i+3}$ is either complete or anti-complete to $\{y,z\}$.}
To prove this, suppose for contradiction that this is not the case.
Without loss of generality, we may assume that that~$x$ is adjacent to~$y$ but not to~$z$.
Since~$V_{i}$ is large it contains at least two other vertices, say~$a$ and~$b$.
Then~$z$ is adjacent to both~$a$ and~$b$ (since $G[V_i\cup V_{i+1}]$ is a co-matching), while~$x$ is adjacent neither to~$a$ nor to~$b$ (since $G[V_i \cup X]$ and  $G[V_i\cup V_{i+3}]$ are matchings).
But then $G[x,y,a,z,b]$ is a $P_2+P_3$, a contradiction.

\medskip
\noindent
We are now ready to prove the lemma.
We may delete from~$G$ the vertices $v_1,\ldots, v_5$ and all vertices in every small set~$X$ or~$V_i$. Let~$G'$ be the resulting graph.  
In order to show that~$G'$ has bounded uniformicity, we split the analysis into the following cases.

\medskip
\thmcase{All sets $V_1,\ldots,V_5$ are large.}
From the above claims we conclude that any two consecutive sets are complete to each other and any two opposite sets are anti-complete to each other. 
Also, $X$ is anti-complete to each of them. Therefore $G'$ is 6-uniform.

\medskip
\thmcase{Four sets~$V_i$ are large, say $V_1,\ldots,V_4$.} Then~$V_1$ and~$V_4$ form a matching, while any other pair of these sets are either complete or anti-complete to each other.
Also, $X$ is anti-complete to each of them. Therefore $G'$ is 5-uniform. 

\medskip
\thmcase{Three consecutive sets~$V_i$ are large, say $V_1,V_2,V_3$.} Then~$V_2$ is complete to~$V_1$ and~$V_3$, while~$V_1$ and~$V_3$ are anti-complete to each-other. Also, $X$ is anti-complete to~$V_1$ and~$V_3$ forms a matching with~$V_2$.
Therefore $G'$ is 4-uniform.

\medskip
\thmcase{Three non-consecutive sets~$V_i$ are large, say $V_1,V_3,V_4$.} Then~$X$ is anti-complete to each of them. From the above claims we know that~$V_1$ forms a matching with both~$V_3$ and~$V_4$,
while~$V_3$ and~$V_4$ form a co-matching. Also, from Claim~\ref{claim:last} we conclude that whenever two vertices $y\in V_3$ and $z\in V_4$ are non-adjacent, then either none of them 
has a neighbour in~$V_1$ or they both are adjacent to the same vertex of~$V_1$. Therefore, if we complement the edges between~$V_3$ and~$V_4$, then~$G'$ transforms into a graph 
in which every connected component is one of the following graphs: $K_3$, $P_3$, $P_2$,~$P_1$.
Each of these graphs is an induced subgraph of $\overline{P_1+P_3}$ (also known as the {\em paw}), so the obtained graph is $4$-uniform.
By Lemma~\ref{lem:k-uniform-bip}, it follows that $G[V_1\cup V_3\cup V_4]$ is $12$-uniform and so~$G'$ is $13$-uniform.

\medskip
\thmcase{Two consecutive sets~$V_i$ are large, say $V_3,V_4$.} This case is similar to the previous one, where the role of~$V_1$ is played by~$X$.
Thus $G'$ is $12$-uniform. 

\medskip
\thmcase{Two non-consecutive sets~$V_i$ are large.} Then~$X$ is anti-complete to each of them and hence the graph is obviously $3$-uniform. 

\medskip
\thmcase{At most one set~$V_i$ is large.} Then~$G'$ is obviously $2$-uniform.

\medskip
\noindent
Since the above cases cover all possibilities, this completes the proof.\qed
\end{proof}

\subsection{Graphs containing a~$C_4$}

By Lemmas~\ref{lem:diamond-P_2+P_3-free-K5-non-free} and~\ref{lem:diamond-P_2+P_3-free-C5-non-free}, we may restrict ourselves to looking at $(K_5,C_5)$-free graphs in our class.
We prove the following structural result.

\begin{lemma}\label{lem:diamond-P_2+P_3-free-C4-non-free}
Let~$G$ be a $(\overline{2P_1+P_2},P_2+\nobreak P_3,K_5,C_5)$-free graph containing an induced~$C_4$.
Then by deleting at most $17$ vertices and applying at most two bipartite complementations, we can modify~$G$ into the disjoint union of a $(P_2+P_3)$-free bipartite graph and a $3$-uniform graph.
\end{lemma}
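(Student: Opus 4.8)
The plan is to mimic the structure of the preceding two lemmas: fix an induced $C_4$ on vertices $v_1,v_2,v_3,v_4$ (in cyclic order) and partition the remaining vertices according to their neighbourhood on the cycle. First I would argue, exactly as in the $C_5$ case, that any vertex with two \emph{consecutive} neighbours on the $C_4$ lies in a clique together with those two cycle vertices, and since $G$ is $K_5$-free this clique is bounded, while $\overline{2P_1+P_2}$-freeness forces it to be a clique; hence only a bounded number of such vertices exist and they can be deleted. Similarly a vertex adjacent to exactly one cycle vertex, or to two \emph{opposite} cycle vertices, or to none, defines the sets $W_i$ (one cycle-neighbour $v_i$), $U_{13}, U_{24}$ (opposite pairs), and $X$ (no cycle-neighbours); the $\overline{2P_1+P_2}$- and $(P_2+P_3)$-free conditions should, as in the $C_5$ lemma, force each of these sets to be independent or to contain only boundedly many vertices, so after deleting a constant number of vertices (this is where the ``$17$'' comes from — I would track the exact count: $4$ cycle vertices plus the bounded leftovers from each type of set) we are left with the sets $X$ and the ``large'' sets among $W_1,\dots,W_4,U_{13},U_{24}$, each independent.

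Next I would establish a collection of claims, in the spirit of Claims in Lemma~\ref{lem:diamond-P_2+P_3-free-C5-non-free}, describing the bipartite graphs induced between pairs of these sets: each such bipartite graph is a matching, a co-matching, or complete/anti-complete, and large sets impose rigidity on their neighbours (e.g. if a set is large then certain other sets are anti-complete to it, or complete to it). The key structural dichotomy I expect is that the sets attached to one ``side'' of the $C_4$ (say $W_1, U_{13}, W_3$, together with part of $X$) behave bipartitely with respect to the sets on the other side ($W_2, U_{24}, W_4$, and the rest of $X$), so that after applying one or two bipartite complementations — presumably one to turn a co-matching into a matching and one to split off a complete bipartite join — the graph falls apart into two pieces: one piece is genuinely bipartite and $(P_2+P_3)$-free (because it lives ``across'' the $C_4$ with the path-type forbidden subgraph still excluded), and the other piece, built from only a bounded number of independent sets with matching/anti-complete adjacencies among them, is $c$-uniform for some small $c$; the claim is that the bound works out to $3$.

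The main obstacle will be the case analysis needed to show that exactly \emph{two} bipartite complementations suffice and that the ``uniform'' piece can be made $3$-uniform rather than merely $O(1)$-uniform. A $C_4$ has more symmetry and fewer forbidden-pair constraints than a $C_5$, so some adjacency patterns that were automatically complete or anti-complete in the $C_5$ argument will here only be matchings or co-matchings, and I will need an analogue of Claim~\ref{claim:last} to control how a large set's matching-neighbour relations interact across the cycle — the delicate point is ensuring that after the bipartite complementations each connected component of the uniform part is one of a bounded list of small graphs (each an induced subgraph of some fixed $3$-vertex-labellable pattern), so that Lemma~\ref{lem:k-uniform-bip} (to absorb the cost of the bipartite complementations) still lands at uniformicity $3$ rather than something larger. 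I would organise this as a sequence of cases on which of the sets $W_1,W_3,U_{13}$ and $W_2,W_4,U_{24}$ are large, handling the ``many large sets'' cases by showing the adjacencies become complete/anti-complete (giving low uniformicity directly) and the ``few large sets'' cases by showing the leftover vertices can be absorbed into the deletion budget, and finally checking that the $(P_2+P_3)$-free bipartite part really is $(P_2+P_3)$-free by noting that any induced $P_2+P_3$ in it would, together with suitable cycle vertices, already have been forbidden in $G$.
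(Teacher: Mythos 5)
Your setup matches the paper's: fix the $C_4$, use $\overline{2P_1+P_2}$- and $K_5$-freeness to delete the (at most $2\times 4=8$) vertices with two consecutive neighbours on the cycle, and classify the rest by their cycle-neighbourhood into the two ``opposite-pair'' sets (your $U_{24},U_{13}$, the paper's $V_1,V_2$), the sets $W_i$ with a single cycle-neighbour, and the set $X$ with none. But from that point on your plan defers exactly the content that constitutes the proof, and the dichotomy you ``expect'' is not the one that works. The actual decomposition is not ``one side of the $C_4$ versus the other'': the $3$-uniform piece is the subgraph induced by $X_0$ (the vertices of $X$ with neighbours in \emph{both} $V_1$ and $V_2$) together with the sets $V_{10}\subseteq V_1$ and $V_{20}\subseteq V_2$ of their neighbours, so it draws from both ``sides''. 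The proof hinges on rigidity claims you never formulate: each vertex of $X_0$ has exactly one neighbour in $V_{10}$ and one in $V_{20}$ and these two are adjacent (else a $C_5$ or a $\overline{2P_1+P_2}$ appears); each vertex of $V_{10}\cup V_{20}$ has exactly one neighbour in $X_0$; $V_{10}$ is complete to $V_2$ and $V_{20}$ to $V_1$; and every vertex outside $V_{10}\cup V_{20}\cup X_0$ is complete or anti-complete to each of $V_{10}$ and $V_{20}$. These last claims are precisely why \emph{two} bipartite complementations (one per $V_{i0}$) suffice to disconnect a $3$-uniform piece; your guess that the complementations are used ``to turn a co-matching into a matching and to split off a complete bipartite join'' does not produce the required disjoint union and has no supporting argument.

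Two further ingredients are missing and would sink the remainder as planned. First, you keep all four sets $W_1,\dots,W_4$; the paper shows that for each $i\in\{1,2\}$ one of $W_i,W_{i+2}$ is empty (after deleting singleton $W_i$'s), so that one may assume $W_3=W_4=\emptyset$ — without this the final bipartition $X_1\cup V_1\cup W_1$ versus $X_2\cup V_2\cup W_2$ of the leftover graph is unavailable, and the bipartiteness argument (no triangles, plus $C_5$-freeness and the fact that longer odd cycles contain an induced $P_2+P_3$) does not go through. Second, your proposed case analysis on which sets are ``large'', modelled on the $C_5$ lemma, is not the right mechanism here: in the $C_4$ situation largeness plays no role beyond the initial constant-size deletions, and the interaction between $V_1$ and $V_2$ is governed by $X_0$ rather than by matching/co-matching claims between large sets. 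Finally, the budget of $17$ is not ``$4$ plus leftovers'' in some unspecified way; it is exactly $8$ (consecutive-neighbour vertices) $+\,4$ (singleton $W_i$'s) $+\,1$ (a singleton $V_{10}$ or $V_{20}$, which empties $X_0$) $+\,4$ (the cycle itself, deleted at the end). As written, the proposal identifies the general shape of the argument but leaves the decisive structural claims unproven and aims at a decomposition that is not the one the graph actually admits.
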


\begin{proof}
In order to prove the lemma, we again modify the proof from~\cite{DHP0}, which shows that this class of graphs has bounded clique-width.
Let~$G$ be a $(\overline{2P_1+P_2},\allowbreak P_2+\nobreak P_3,\allowbreak K_5,C_5)$-free graph containing a~$C_4$ induced by the vertices $v_1,v_2,v_3,v_4$ in order.
We interpret subscripts on vertices modulo~$4$ in this proof.

Let~$Y$ be the set of vertices adjacent to~$v_1$ and~$v_2$ (and possibly to other vertices on the cycle).
If $y_1,y_2 \in Y$ are non-adjacent, then $G[v_1,v_2,y_1,y_2]$ would be a $\overline{2P_1+P_2}$. Therefore, $Y$ is a clique.
This clique has at most two vertices, since otherwise three vertices of~$Y$ together with~$v_1$ and~$v_2$ would create a~$K_5$. 
Therefore, after deleting at most $2 \times 4 = 8$ vertices, we may assume that no vertex of the graph contains two consecutive neighbours on the cycle.

Let~$V_1$ denote the set of vertices adjacent to~$v_2$ and~$v_4$, and let~$V_2$ denote the set of vertices adjacent to~$v_1$ and~$v_3$.
For $i\in \{1,2,3,4\}$, let~$W_i$ denote the set of vertices whose only neighbour on the cycle is~$v_i$.
If a set~$W_i$ contains at most one vertex then we may delete this vertex.
Thus, deleting at most four vertices, we may assume that every set~$W_i$ is either empty of contains at least two vertices.
Finally, let~$X$ be the set of vertices with no neighbour on the cycle.

Dabrowski, Huang and Paulusma proved the following claims about the edges between these sets (see Appendix~\ref{app:c4-1} for a proof).

\medskip
\clm{\em(\cite{DHP0}) $V_i$ is independent for $i\in \{1,2\}$.}
\clm{\em(\cite{DHP0}) $W_i$ is independent for $i\in \{1,2,3,4\}$.}
\clm{\em(\cite{DHP0}) $X$ is independent.}
\clm{\em(\cite{DHP0}) $W_i$ is anti-complete to~$X$ for $i\in \{1,2,3,4\}$.}
\clm{\em(\cite{DHP0}) For $i \in \{1,2\}$ either~$W_i$ or~$W_{i+2}$ is empty. Therefore, we may assume by symmetry that $W_3=\emptyset$ and $W_4=\emptyset$.}

Note that in our arguments so far we have deleted at most $12$ vertices. We now argue as follows:

Any vertices of~$X$ that do not have neighbours in $V_1\cup V_2$ must be isolated vertices of the graph.
Since adding isolated vertices to a $P_2+\nobreak P_3$-free bipartite graph maintains the property of it being $P_2+\nobreak P_3$-free and bipartite, we may therefore assume that every vertex in~$X$ has a neighbour in $V_1\cup V_2$.
Let~$X_0$ denote the subset of~$X$ whose vertices have neighbours both in~$V_1$ and~$V_2$, 
let~$X_1$ denote the subset of~$X$ whose vertices have no neighbours in~$V_1$ and let~$X_2$ denote the subset of~$X$ whose vertices have no neighbours in~$V_2$.

Let~$V_0$ denote the set of vertices in $V_1\cup V_2$ adjacent to at least one vertex of~$X_0$ and let $V_{10}=V_1\cap V_0$ and $V_{20}=V_2\cap V_0$.
If~$V_{10}$ or~$V_{20}$ contains at most one vertex then we may delete this vertex.
This would cause~$X_0$ to become empty.
Therefore, by deleting at most one vertex, we may assume that either both~$V_{10}$ and~$V_{20}$ each contain at least two vertices or else $V_{10}$, $V_{20}$ and~$X_0$ are all empty.
We will show that $G[X_0 \cup V_0]$ is $3$-uniform and can be separated from the rest of the graph using at most two bipartite complementations.
To do this, we first prove the following additional claims.

\medskip
\clm{\em\label{clm:x0-unique-vi0-nbr}Every vertex of~$X_0$ has exactly one neighbour in~$V_{10}$ and exactly one neighbour in~$V_{20}$ and these neighbours are adjacent.} 
First, we observe that if a vertex $x\in X_0$ is adjacent to $y\in V_1$ and to $z\in V_2$,
then~$y$ is adjacent to~$z$, since otherwise $G[x,y,v_2,v_1,z]$ is an induced~$C_5$. This implies that if~$x$ has the third neighbour $y'\in V_1\cup V_2$, then $G[x,z,y,y']$ is a $\overline{2P_1+P_2}$.
This contradiction proves the claim.

\medskip
\clm{\em\label{clm:v0-unique-x0-nbr}Every vertex of~$V_0$ is adjacent to exactly one vertex of~$X_0$.}
Let~$v$ be a vertex in~$V_0$.
Without loss of generality, assume that~$v$ belongs to~$V_{10}$.
Suppose that~$v$ has at least two neighbours in~$X_0$, say~$x$ and~$x'$, and at least one non-neighbour, say~$x''$.
Let~$v''$ be the neighbour of~$x''$ in~$V_{10}$.
Then $G[v'',x'',x,v,x']$ is a $P_2+\nobreak P_3$. This contradiction
shows that if~$v$ has at least two neighbours in~$X_0$, then it must be adjacent to all the vertices of~$X_0$.
Since every vertex of~$X_0$ has exactly one neighbour in~$V_{10}$ it follows that~$v$ is the only vertex of~$V_{10}$, a contradiction.
We conclude that~$v$ (and hence every other vertex of~$V_0$) has exactly one neighbour in~$X_0$.

\medskip
\clm{\em\label{clm:v10-comp-v2}$V_{10}$ is complete to~$V_2$ and~$V_{20}$ is complete to~$V_1$.}
Suppose $v\in V_{10}$ is non-adjacent to $y\in V_2$ and let~$x$ be the unique neighbour of~$v$ in~$X_{0}$.
Since~$y$ is non-adjacent to~$v$, it cannot be the unique neighbour of~$x$ in~$V_2$.
Therefore~$y$ must be non-adjacent to~$x$.
It follows that $G[x,v,v_1,y,v_3]$ is a $P_2+\nobreak P_3$, a contradiction.
The second part of the claim follows by symmetry.

\medskip
\clm{\em\label{clm:w1w2x1x2-comp-vi0}Every vertex in $W_1\cup W_2\cup X_1\cup X_2$ is either complete or anti-complete to $V_{i0}$ for $i=1,2$.} Suppose a vertex $w\in W_1\cup W_2\cup X_1\cup X_2$ has both a neighbour~$v$ and a non-neighbour~$v'$ in~$V_{10}$.
Let~$x$ and~$x'$ be the neighbours of~$v$ and~$v'$, respectively, in~$X_0$.
Recall that~$x$ and~$x'$ must be non-adjacent to~$w$.
Then $G[v',x',w,v,x]$ is a $P_2+\nobreak P_3$, a contradiction.

\medskip
\noindent
By Claims~\ref{clm:v10-comp-v2} and~\ref{clm:w1w2x1x2-comp-vi0}, every vertex outside $V_{10}\cup V_{20}\cup X_0$ is either complete or anti-complete to~$V_{10}$ and either complete or anti-complete to~$V_{20}$.
Applying at most two bipartite complementations, we may therefore disconnect $G[V_{10}\cup V_{20}\cup X_0]$ from the rest of the graph i.e. remove all edges between vertices in $V_{10}\cup V_{20}\cup X_0$ and vertices outside $V_{10}\cup V_{20}\cup X_0$.
By Claims~\ref{clm:x0-unique-vi0-nbr}, \ref{clm:v0-unique-x0-nbr} and~\ref{clm:v10-comp-v2} it follows that $G[V_{10}\cup V_{20}\cup X_0]$ is a $3$-uniform graph.

\medskip
\noindent
We may now assume that~$X_0$ is empty.
Let~$H$ be the graph obtained from~$G$ by deleting the vertices of the original cycle.
Note that $V(H)=X_1\cup V_1\cup W_1 \cup X_2\cup V_2\cup W_2$.
It remains to show that~$H$ is a $(P_2+\nobreak P_3)$-free bipartite graph.

We claim that~$H$ is bipartite with independent sets $X_1\cup V_1\cup W_1$ and  $X_2\cup V_2\cup W_2$.
To show this, it suffices to prove that~$H$ has no triangles, because all other odd cycles are forbidden in~$G$ (and hence in~$H$).
We know that $X_1\cup V_1$ is an independent set and $X_2\cup V_2$ is an independent set.
Also, $W_1$ and~$W_2$ are independent and the vertices of~$X$ have no neighbours in $W_1\cup W_2$. 
It follows that in~$H$ vertices in~$X_1$ an can only have neighbours in~$V_2$ and vertices of~$X_2$ can only have neighbours in~$V_1$, so no triangle in~$H$ contains a vertex of~$X$.
By symmetry, if $H[x,y,z]$ is a triangle then we may therefore assume that
$x \in W_1$, $y \in V_2$ and $z \in V_1 \cup W_2$.
Now $G[x,y,z,v_1]$ is a $\overline{2P_1+P_2}$, a contradiction.
It follows that~$H$ is bipartite.
Moreover, it is $(P_2+\nobreak P_3)$-free.
This completes the proof.\qed
\end{proof}

Since $P_2+\nobreak P_3$ is an induced subgraph of~$P_6$, it follows that every $(P_2+\nobreak P_3)$-free graph is $P_6$-free.
Combining Lemma~\ref{lem:diamond-P_2+P_3-free-C4-non-free} with Theorem~\ref{thm:uniform} and Lemmas \ref{lem:p6-bip-wqo}, \ref{lem:bipartite-complementation} and~\ref{lem:adding-vertices} we therefore obtain the following corollary.

\begin{corollary}\label{cor:diamond-P_2+P_3-free-C4-non-free}
The class of $(\overline{2P_1+P_2},P_2+\nobreak P_3,K_5,C_5)$-free graphs containing an induced~$C_4$ is well-quasi-ordered by the labelled induced subgraph relation.
\end{corollary}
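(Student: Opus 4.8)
The plan is to combine Lemma~\ref{lem:diamond-P_2+P_3-free-C4-non-free} with the preservation results of Section~\ref{s-permitted}, treating the two pieces of the decomposition separately. Let~${\cal G}$ denote the class of $(\overline{2P_1+P_2},P_2+P_3,K_5,C_5)$-free graphs that contain an induced~$C_4$. Since $P_2+P_3\ssi P_6$, every $(P_2+P_3)$-free graph is $P_6$-free, so the class~${\cal A}$ of $(P_2+P_3)$-free bipartite graphs is a subclass of the class of $P_6$-free bipartite graphs; since a subclass of a class well-quasi-ordered by the labelled induced subgraph relation is itself well-quasi-ordered by it, Lemma~\ref{lem:p6-bip-wqo} gives that~${\cal A}$ is well-quasi-ordered by the labelled induced subgraph relation. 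Similarly, the class~${\cal B}$ of $3$-uniform graphs has bounded uniformicity, so it is well-quasi-ordered by the labelled induced subgraph relation by Theorem~\ref{thm:uniform}.

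Next I would verify that the class~${\cal C}$ of all graphs $A+B$ (disjoint unions), with $A\in{\cal A}$ and $B\in{\cal B}$, is well-quasi-ordered by the labelled induced subgraph relation. This class is hereditary, so by Lemma~\ref{lem:prime} it suffices to prove the claim for the connected graphs in~${\cal C}$. A connected graph that is the disjoint union of a graph in~${\cal A}$ and a graph in~${\cal B}$ must have one of these two graphs empty, and therefore lies in~${\cal A}$ or in~${\cal B}$; hence the set of connected graphs in~${\cal C}$ is contained in the union of the two classes~${\cal A}$ and~${\cal B}$. In any infinite sequence of labelled graphs taken from this union, infinitely many terms come from a single one of the two classes, which is well-quasi-ordered by the labelled induced subgraph relation, so two of those terms are comparable; thus the sequence is not an antichain, and ${\cal C}$ is well-quasi-ordered by the labelled induced subgraph relation.

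Finally I would transfer this conclusion back to~${\cal G}$ using the preservation lemmas. By Lemma~\ref{lem:diamond-P_2+P_3-free-C4-non-free}, for every $G\in{\cal G}$ there is a set of at most~$17$ vertices and, after deleting them, a choice of at most two bipartite complementations whose application turns~$G$ into a member of~${\cal C}$. So there is a class~${\cal G}_1$ that is obtained from~${\cal G}$ by at most~$17$ vertex deletions (in the sense of Section~\ref{s-permitted}), and a class~${\cal G}_2$ obtained from~${\cal G}_1$ by at most two bipartite complementations, with ${\cal G}_2\subseteq{\cal C}$. Being a subclass of the well-quasi-ordered class~${\cal C}$, the class~${\cal G}_2$ is well-quasi-ordered by the labelled induced subgraph relation; Lemma~\ref{lem:bipartite-complementation} then gives the same for~${\cal G}_1$, and a further application of Lemma~\ref{lem:adding-vertices} gives it for~${\cal G}$, as required. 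This last step is essentially bookkeeping: the only things needing a little care are matching the ``delete vertices, then apply bipartite complementations'' phrasing of Lemma~\ref{lem:diamond-P_2+P_3-free-C4-non-free} with the $(k,\gamma)$-obtained formalism of Section~\ref{s-permitted}, and noting that whenever Lemma~\ref{lem:diamond-P_2+P_3-free-C4-non-free} uses fewer than~$17$ deletions we may pad with arbitrary further vertex deletions and still stay inside~${\cal C}$, since ${\cal A}$ and~${\cal B}$ are hereditary. There is no genuine obstacle; the substance of the corollary is entirely contained in Lemma~\ref{lem:diamond-P_2+P_3-free-C4-non-free}.
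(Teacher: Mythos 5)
Your proposal is correct and follows essentially the same route as the paper: the corollary is obtained by combining Lemma~\ref{lem:diamond-P_2+P_3-free-C4-non-free} with Lemma~\ref{lem:p6-bip-wqo}, Theorem~\ref{thm:uniform} and the preservation results Lemmas~\ref{lem:bipartite-complementation} and~\ref{lem:adding-vertices}. The only difference is that you spell out the bookkeeping the paper leaves implicit (the disjoint-union step via Lemma~\ref{lem:prime} and the $(k,\gamma)$-obtained formalism), which is fine.
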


\subsection{$(\overline{2P_1+P_2},P_2+\nobreak P_3)$-free graphs}

\begin{theorem}\label{thm:diamond-p2+p3-wqo}
The class of $(\overline{2P_1+P_2},P_2+\nobreak P_3)$-free graphs is well-quasi-ordered by the labelled induced subgraph relation.
\end{theorem}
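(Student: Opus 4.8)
The plan is to assemble the theorem from the three preparatory results by a pigeonhole argument on infinite antichains, reducing to the single outstanding case of graphs that additionally forbid $K_5$, $C_5$ and $C_4$, and then to observe that this last class sits inside a class already known to be well-quasi-ordered. Concretely, Lemma~\ref{lem:Pk-Kl-Kmm-wqo} (which has not yet been used) is exactly the tool for the leftover case.

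First I would set up the reduction. Suppose, for a contradiction, that the class ${\cal X}$ of $(\overline{2P_1+P_2},P_2+P_3)$-free graphs is not well-quasi-ordered by the labelled induced subgraph relation. Then there is a well-quasi-order $(L,\leq)$ and an infinite antichain $G_1,G_2,\ldots$ of $L$-labelled graphs in ${\cal X}$. Since any infinite subsequence of an antichain is again an infinite antichain, and since each of the properties ``contains an induced $K_5$'', ``contains an induced $C_5$'', ``contains an induced $C_4$'' splits any set of graphs into two parts, I would pass three times to an infinite subsequence that is homogeneous with respect to these properties in turn. If infinitely many $G_i$ contain an induced $K_5$, the corresponding subsequence is an infinite antichain of labelled graphs inside the class of Lemma~\ref{lem:diamond-P_2+P_3-free-K5-non-free}, a contradiction; so we may assume every $G_i$ is $K_5$-free. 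If infinitely many of these $K_5$-free graphs contain an induced $C_5$, then by Lemma~\ref{lem:diamond-P_2+P_3-free-C5-non-free} together with Theorem~\ref{thm:uniform} we again get a contradiction; so we may assume every $G_i$ is $(K_5,C_5)$-free. If infinitely many of these contain an induced $C_4$, we contradict Corollary~\ref{cor:diamond-P_2+P_3-free-C4-non-free}; so we may assume every $G_i$ is $(\overline{2P_1+P_2},P_2+P_3,K_5,C_5,C_4)$-free.

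It remains to derive a contradiction from an infinite antichain of $L$-labelled $(\overline{2P_1+P_2},P_2+P_3,K_5,C_5,C_4)$-free graphs, and here the key point is purely about forbidden subgraphs: since $P_2+P_3$ is an induced subgraph of $P_6$, every $(P_2+P_3)$-free graph is $P_6$-free (as already noted in the excerpt before Corollary~\ref{cor:diamond-P_2+P_3-free-C4-non-free}), and since $C_4$ is precisely the biclique $K_{2,2}$, every $C_4$-free graph is $K_{2,2}$-free. Hence every graph in the leftover class is $(P_6,K_5,K_{2,2})$-free, so the leftover class is a subclass of the class of $(P_6,K_5,K_{2,2})$-free graphs, which by Lemma~\ref{lem:Pk-Kl-Kmm-wqo} (with $k=6$, $\ell=5$, $m=2$) is well-quasi-ordered by the labelled induced subgraph relation. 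This contradicts the existence of the infinite antichain, and the theorem follows.

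I do not expect a real obstacle in this final assembly: the pigeonhole reduction is routine once one observes that a subsequence of an antichain is an antichain and that an infinite labelled antichain lying inside a subclass is still an infinite antichain there, and the inclusion of the leftover class into $(P_6,K_5,K_{2,2})$-free graphs is immediate. The genuine work has already been done in the three preparatory statements — in particular in Lemmas~\ref{lem:diamond-P_2+P_3-free-C5-non-free} and~\ref{lem:diamond-P_2+P_3-free-C4-non-free}, where the clique-width decompositions of Dabrowski, Huang and Paulusma had to be recast so as to control uniformicity, or to land (after a bounded number of vertex deletions and bipartite complementations) in $P_6$-free bipartite graphs. The one thing worth double-checking is that the four situations really do cover all of ${\cal X}$, i.e. that Corollary~\ref{cor:diamond-P_2+P_3-free-C4-non-free} is stated for the subclass \emph{containing} an induced $C_4$, so that the complementary subclass is exactly the $(K_5,C_5,C_4)$-free one handled in the last step.
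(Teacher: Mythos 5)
Your proposal is correct and follows essentially the same route as the paper: split according to whether a graph contains an induced $K_5$, $C_5$ or $C_4$ (handled by Lemma~\ref{lem:diamond-P_2+P_3-free-K5-non-free}, Lemma~\ref{lem:diamond-P_2+P_3-free-C5-non-free} with Theorem~\ref{thm:uniform}, and Corollary~\ref{cor:diamond-P_2+P_3-free-C4-non-free}), and observe that the remaining $(K_5,C_5,C_4)$-free graphs form a subclass of $(P_6,K_5,K_{2,2})$-free graphs, which is well-quasi-ordered by Lemma~\ref{lem:Pk-Kl-Kmm-wqo}. Your explicit pigeonhole argument on an assumed infinite antichain is just a spelled-out version of the standard fact, used implicitly in the paper, that a finite union of well-quasi-ordered sets is well-quasi-ordered.
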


\begin{proof}
Graphs in the class under consideration containing an induced subgraph isomorphic to
$K_5$,~$C_5$ or~$C_4$ are well-quasi-ordered by the labelled induced subgraph relation by Lemmas~\ref{lem:diamond-P_2+P_3-free-K5-non-free} and~\ref{lem:diamond-P_2+P_3-free-C5-non-free} and Corollary~\ref{cor:diamond-P_2+P_3-free-C4-non-free}, respectively.
The remaining graphs form a subclass of $(P_6,K_{5},K_{2,2})$-free graphs, since $C_4=K_{2,2}$ and $P_2+\nobreak P_3$ is an induced subgraph of~$P_6$.
By Lemma~\ref{lem:Pk-Kl-Kmm-wqo}, this class of graphs is well-quasi-ordered
by the labelled induced subgraph relation. Therefore, the class of $(\overline{2P_1+P_2},P_2+\nobreak P_3)$-free graphs is well-quasi-ordered by the labelled induced subgraph relation.\qed
\end{proof}

\section{Two New Non-Well-Quasi-Ordered Classes}\label{s-no}

In this section we show that the classes of $(\overline{2P_1+P_2},P_2+\nobreak P_4)$-free graphs and $(\overline{P_1+P_4},P_1+\nobreak 2P_2)$-free graphs are not well-quasi-ordered by the induced subgraph relation.
The antichain used to prove the first of these cases was previously used by Atminas and Lozin to show that the class of $(\overline{2P_1+P_2},P_6)$-free graphs is not well-quasi-ordered with respect to the induced subgraph relation.
Because of this, we can show show a stronger result for the first case.

\begin{theorem}\label{thm:diamond-P2+P_4}
The class of $(\overline{2P_1+P_2},P_2+\nobreak P_4,P_6)$-free graphs is not well-quasi-ordered 
by the induced subgraph relation.
\end{theorem}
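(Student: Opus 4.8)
The plan is to produce an explicit infinite antichain inside the class. The key observation is that the $(\overline{2P_1+P_2},P_2+\nobreak P_4,P_6)$-free graphs form a subclass of the $(\overline{2P_1+P_2},P_6)$-free graphs, and the latter class is already known \emph{not} to be well-quasi-ordered by the induced subgraph relation: Atminas and Lozin~\cite{AL15} proved this by exhibiting an infinite antichain $\mathcal{A}=\{G_1,G_2,\ldots\}$ consisting of $(\overline{2P_1+P_2},P_6)$-free graphs. Consequently the whole theorem reduces to a single additional check: each $G_n\in\mathcal{A}$ is also $(P_2+\nobreak P_4)$-free. Once this is established, $\mathcal{A}$ is an infinite antichain of $(\overline{2P_1+P_2},P_2+\nobreak P_4,P_6)$-free graphs, which shows that this class is not well-quasi-ordered by the induced subgraph relation.

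First I would recall from~\cite{AL15} the precise description of the graphs $G_n$, together with the canonical partition of $V(G_n)$ into the parts used to define them and the adjacency pattern between those parts. I would also record that $G_n$ is $\overline{2P_1+P_2}$-free, i.e. diamond-free, and use the standard fact that in a diamond-free graph every edge lies in a unique maximal clique (equivalently, the neighbourhood of every vertex induces a disjoint union of cliques). This rigidity is what keeps the remaining analysis short.

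The core step is the verification that $G_n$ is $(P_2+\nobreak P_4)$-free. I would take an arbitrary induced $P_4$, say on vertices $w,x,y,z$ in this order, and classify its possible positions with respect to the parts of the partition; by the recorded adjacencies there are only boundedly many such ``shapes''. For each shape I would determine the set $S$ of vertices of $G_n$ anti-complete to $\{w,x,y,z\}$ and show that $S$ is contained in a single part (or in a bounded union of parts) that induces an independent set in $G_n$. Then no two vertices of $S$ are adjacent, so $G_n$ has no $P_2$ that is anti-complete to an induced $P_4$; that is, $G_n$ contains no induced $P_2+\nobreak P_4$. Combining this with the facts from~\cite{AL15} that $\mathcal{A}$ is an infinite antichain and that its members are $\overline{2P_1+P_2}$-free and $P_6$-free completes the proof.

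The main obstacle I expect is purely organisational: correctly enumerating the finitely many ways an induced $P_4$ can be placed among the parts of $G_n$ and, in each case, pinning down and bounding the set of vertices anti-complete to it. There should be no conceptual difficulty once the construction of the $G_n$ from~\cite{AL15} is written down explicitly; if desired, the $(P_2+\nobreak P_4)$-freeness could instead be argued directly from the explicit edge set of $G_n$ without reference to the partition.
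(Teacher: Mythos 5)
Your proposal is correct and is essentially the paper's own proof: the paper likewise takes the Atminas--Lozin antichain of $(\overline{2P_1+P_2},P_6)$-free graphs $G_{4n}$ (a $C_{4n}$ with parts $X,Y,Z$ and $Y$ made complete to $Z$) and only adds the check that each $G_{4n}$ is $(P_2+\nobreak P_4)$-free. Your sketched verification collapses to the paper's short argument, since any induced $P_4$ has both internal vertices in the complete bipartite part $Y\cup Z$, so every vertex anti-complete to it lies in the independent set $X$ and no disjoint $P_2$ exists.
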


\begin{proof}
Let $n \geq 2$ be an integer and consider a cycle $C_{4n}$, say $x_1-x_2-\cdots-x_{4n}-x_1$.
We partition the vertices of this cycle into three sets as follows as follows:
\begin{align*}
X &=\{x_i\; |\; i \equiv 0\; \textrm{or}\; 2\; \bmod 4\},\\
Y &=\{x_i\; |\; i \equiv 1\; \bmod 4\},\\
Z &=\{x_i\; |\; i \equiv 3\; \bmod 4\}.
\end{align*}
Let~$G_{4n}$ be the graph obtained from~$C_{4n}$ by connecting every vertex of~$Y$ to every vertex of~$Z$ (see also \figurename~\ref{fig:thm:diamond-P2+P_4}).
Atminas and Lozin showed that the resulting graphs are $(\overline{2P_1+P_2},P_6)$-free and form an infinite antichain with respect to the induced subgraph relation~\cite{AL15}.

\begin{figure}
\begin{center}
\begin{tikzpicture}
\node[circle,draw,inner sep=0pt, text width=15pt, align=center] (x0)  at   (0:3) {$x_{12}$};
\node[circle,draw,inner sep=0pt, text width=15pt, align=center] (x1)  at  (30:3) {$x_1$};
\node[circle,draw,inner sep=0pt, text width=15pt, align=center] (x2)  at  (60:3) {$x_2$};
\node[circle,draw,inner sep=0pt, text width=15pt, align=center] (x3)  at  (90:3) {$x_3$};
\node[circle,draw,inner sep=0pt, text width=15pt, align=center] (x4)  at (120:3) {$x_4$};
\node[circle,draw,inner sep=0pt, text width=15pt, align=center] (x5)  at (150:3) {$x_5$};
\node[circle,draw,inner sep=0pt, text width=15pt, align=center] (x6)  at (180:3) {$x_6$};
\node[circle,draw,inner sep=0pt, text width=15pt, align=center] (x7)  at (210:3) {$x_7$};
\node[circle,draw,inner sep=0pt, text width=15pt, align=center] (x8)  at (240:3) {$x_8$};
\node[circle,draw,inner sep=0pt, text width=15pt, align=center] (x9)  at (270:3) {$x_9$};
\node[circle,draw,inner sep=0pt, text width=15pt, align=center] (x10) at (300:3) {$x_{10}$};
\node[circle,draw,inner sep=0pt, text width=15pt, align=center] (x11) at (330:3) {$x_{11}$};

\foreach \from/\to in {x0/x1,x1/x2,x2/x3,x3/x4,x4/x5,x5/x6,x6/x7,x7/x8,x8/x9,x9/x10,x10/x11,x11/x0}
   \draw (\from) -- (\to);

\foreach \from in {x1,x5,x9}
  \foreach \to in {x3,x7,x11}
     \draw (\from) -- (\to);
\end{tikzpicture}
\end{center}
\caption{The Graph~$G_{4n}$ from Theorem~\ref{thm:diamond-P2+P_4} when $n=3$.}
\label{fig:thm:diamond-P2+P_4}
\end{figure}
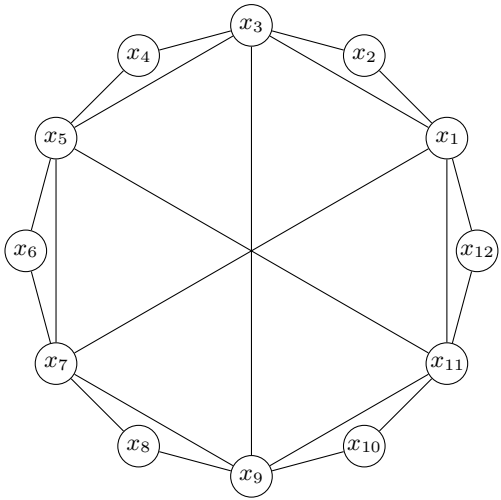

It remains to prove that~$G_{4n}$ is $(P_2+\nobreak P_4)$-free.
We argue as in the proof of~\cite[Theorem~1~(iv)]{DHP0}.
For contradiction,
suppose that~$G_{4n}$ contains an induced subgraph~$I$ isomorphic to $P_2+\nobreak P_4$.
Let~$I_1$ and~$I_2$ be the connected components of~$I$ isomorphic to~$P_2$
and~$P_4$, respectively. Since $G_{4n}[Y\cup Z]$ is complete bipartite, $I_2$ must
contain at least one vertex of~$X$. Since the two neighbours of any vertex
of~$X$ are adjacent, any vertex of~$X$ in~$I_2$ must be an end-vertex of~$I_2$.
Then, as~$Y$ and~$Z$ are independent sets, $I_2$ contains a vertex of both~$Y$
and~$Z$. As~$I_1$ can contain at most one vertex of~$X$ (because~$X$ is an
independent set), $I_1$ contains a vertex~$u\in Y\cup Z$. However, $G_{4n}[Y\cup
Z]$ is complete bipartite and~$I_2$ contains a vertex of both~$Y$ and~$Z$.
Hence, $u$ has a neighbour in~$I_2$, which is not possible. This completes the
proof.\qed
\end{proof}

The graphs~$G_{4n}$ in the above proof are obtained from cycles in the same way that walls were transformed in~\cite{DHP0} to show unboundedness of clique-width for $(\overline{2P_1+P_2},P_2+\nobreak P_4)$-free graphs (The sets $A,B$ and~$C$ in~\cite{DHP0} correspond to the sets $Y,X$ and~$Z$, respectively, in the proof above). In fact the construction in~\cite{DHP0} is also $P_6$-free by the same arguments as in~\cite{AL15}, so we obtain the following:
\begin{remark}
The class of $(\overline{2P_1+P_2},P_2+\nobreak P_4,P_6)$-free graphs has unbounded clique-width.
\end{remark}

\noindent
For our second class, we need a new construction.

\newpage
\begin{theorem}\label{thm:gem-P1+2P_2}
The class of $(\overline{P_1+P_4},P_1+\nobreak 2P_2)$-free graphs is not well-quasi-ordered 
by the induced subgraph relation.
\end{theorem}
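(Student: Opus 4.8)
The plan is to exhibit an explicit infinite antichain of $(\overline{P_1+P_4},P_1+2P_2)$-free graphs with respect to the induced subgraph relation. Since $\overline{P_1+P_4}$ has a universal vertex (it is the graph $K_1$ joined to $\overline{P_4}=P_4$, i.e. the ``gem''), forbidding it means the graph has no vertex whose neighbourhood induces a $P_4$; equivalently, $G$ is $\overline{P_1+P_4}$-free if and only if $N(v)$ induces a $P_4$-free (cograph) graph for every vertex $v$. Forbidding $P_1+2P_2$ is a sparseness-type condition on the complement. The construction I would use is a ``cycle with a join'' in the spirit of the $G_{4n}$ graphs from the previous theorem: take a long cycle, partition its vertices into a small number of colour classes according to the index modulo some fixed period, and then add complete bipartite connections (joins) between certain classes. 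The join structure should be chosen so that (a) the neighbourhood of every vertex becomes a cograph (killing induced $\overline{P_1+P_4}$), (b) the complement is dense enough that $P_1+2P_2$, which has an isolated vertex and two disjoint edges, cannot appear as an induced subgraph, yet (c) enough of the original cycle edges survive so that the graphs of different lengths remain pairwise incomparable.

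The key steps, in order, would be: first, fix the period (I would try period $4$ as in Theorem~\ref{thm:diamond-P2+P_4}, with classes $W,X,Y,Z$ say, and experiment with which pairs of classes to join) and define the family $\{G_n\}_{n\ge n_0}$ so that $|V(G_n)|$ is strictly increasing. Second, verify $\overline{P_1+P_4}$-freeness: take an arbitrary vertex $v$, determine which class it lies in, compute $N(v)$ as a union of (parts of) classes plus cycle-neighbours, and check that the induced subgraph on $N(v)$ has no induced $P_4$; the cograph structure should follow because after the joins each neighbourhood decomposes as a join/union of independent sets or cliques of bounded complexity. Third, verify $(P_1+2P_2)$-freeness: suppose $G_n$ contained an induced $P_1+2P_2$ on five vertices; argue that the two $P_2$'s cannot both avoid the ``joined'' classes (else they lie inside a sparse cycle-only region, which is too thin), and that any vertex in a joined class forces adjacency to one of the edges via the complete bipartite structure, so the isolated vertex of the $P_1+2P_2$ cannot be non-adjacent to everything — a contradiction, exactly paralleling the argument in the proof of Theorem~\ref{thm:diamond-P2+P_4}. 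Fourth, prove the antichain property: show that $G_m$ is an induced subgraph of $G_n$ only if $m=n$, typically by identifying a ``certificate'' subgraph (a long induced path, or the girth, or the number of vertices of a given degree) that is determined by $n$; the cycle edges that survive the joins should let us recover $n$ from any induced copy.

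The main obstacle I expect is balancing requirements (b) and (c): adding joins to kill $P_1+2P_2$ tends to also destroy the cycle structure that makes the graphs incomparable, while keeping the cycle intact tends to leave room for an induced $P_1+2P_2$ (an isolated vertex plus two disjoint edges is easy to find in anything close to a long cycle). Getting a period and a join pattern that threads this needle — and then rigorously establishing the antichain property, which is usually the most delicate part since it requires understanding all induced embeddings of $G_m$ into $G_n$ — is where the real work lies. Higman's Lemma and the uniformicity machinery are of no help here since we are proving a negative result; the argument must be a direct combinatorial analysis of the specific construction, as in~\cite{AL15} and~\cite{KL11}.
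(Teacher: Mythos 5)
There is a genuine gap: you never actually produce the antichain. The entire content of Theorem~\ref{thm:gem-P1+2P_2} is an explicit construction together with its three verifications ($\overline{P_1+P_4}$-freeness, $(P_1+2P_2)$-freeness, pairwise incomparability), and your proposal defers exactly this -- ``experiment with which pairs of classes to join'', ``where the real work lies'' -- so what remains is a template, not a proof. Worse, the template you propose (classes taken modulo~$4$ kept sparse/independent, with complete bipartite joins between some of them, modelled on the graphs of Theorem~\ref{thm:diamond-P2+P_4}) is oriented the wrong way for this pair of forbidden graphs. A $(P_1+2P_2)$-free graph cannot contain an isolated-looking vertex together with two disjoint edges, so any construction that keeps long sparse stretches of the cycle, or vertices of bounded degree, is doomed: already the graphs $G_{4n}$ of Theorem~\ref{thm:diamond-P2+P_4} contain induced copies of $P_1+2P_2$ (take the degree-two vertex $x_2$ together with the disjoint cycle edges $x_4x_5$ and $x_8x_9$). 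So ``joining classes while leaving them independent'' cannot thread the needle you describe.

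The paper's construction goes in the opposite direction: partition $C_{4n}$ into just two classes, $X=\{x_i : i\equiv 0,1 \bmod 4\}$ and $Y=\{x_i : i\equiv 2,3 \bmod 4\}$, and apply a subgraph complementation \emph{inside each class}, so each class becomes a clique on $2n$ vertices minus a perfect matching, while the only edges between $X$ and $Y$ are the surviving cycle edges (a perfect matching between the classes). Then every vertex has exactly one non-neighbour in its own class and exactly one neighbour in the other class; this gives $(P_1+2P_2)$-freeness (three of the five vertices must fall in one class, forcing a vertex with two non-neighbours there), gives $\overline{P_1+P_4}$-freeness via the triangle structure of the gem, and gives rigidity: specifying which vertex plays the role of $x_1$ determines the whole embedding, and the adjacency of $x_1$ and $x_{4n}$ then rules out $G_{4n}\ssi G_{4m}$ for $n<m$. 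The paper even notes this antichain was originally found by computer search, and that analogous antichains provably do not exist for the remaining open bigenic cases -- a further indication that the construction cannot be waved at; it must be exhibited and checked.
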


\begin{proof}
Let $n \geq 3$ be an integer.
Consider a cycle~$C_{4n}$, say $x_1-x_2-\cdots-x_{4n}-x_1$. We partition the vertices of~$C_{4n}$ as follows:
\begin{align*}
X &=\{x_i\; |\; i \equiv 0\; \textrm{or}\; 1\; \bmod 4\},\\
Y &=\{x_i\; |\; i \equiv 2\; \textrm{or}\; 3\; \bmod 4\}.
\end{align*}
Next we apply a complementation to each of~$X$ and~$Y$, so that in the resulting graph~$X$ and~$Y$ each induce a clique on~$2n$ vertices with a perfect matching removed. Let~$G_{4n}$ be the resulting graph (see also \figurename~\ref{fig:thm:gem-P1+2P_2}).

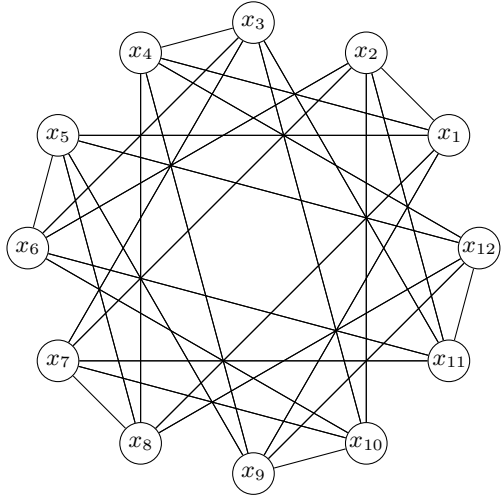
\begin{figure}
\begin{center}
\begin{tikzpicture}
\node[circle,draw,inner sep=0pt, text width=15pt, align=center] (x0)  at   (0:3) {$x_{12}$};
\node[circle,draw,inner sep=0pt, text width=15pt, align=center] (x1)  at  (30:3) {$x_1$};
\node[circle,draw,inner sep=0pt, text width=15pt, align=center] (x2)  at  (60:3) {$x_2$};
\node[circle,draw,inner sep=0pt, text width=15pt, align=center] (x3)  at  (90:3) {$x_3$};
\node[circle,draw,inner sep=0pt, text width=15pt, align=center] (x4)  at (120:3) {$x_4$};
\node[circle,draw,inner sep=0pt, text width=15pt, align=center] (x5)  at (150:3) {$x_5$};
\node[circle,draw,inner sep=0pt, text width=15pt, align=center] (x6)  at (180:3) {$x_6$};
\node[circle,draw,inner sep=0pt, text width=15pt, align=center] (x7)  at (210:3) {$x_7$};
\node[circle,draw,inner sep=0pt, text width=15pt, align=center] (x8)  at (240:3) {$x_8$};
\node[circle,draw,inner sep=0pt, text width=15pt, align=center] (x9)  at (270:3) {$x_9$};
\node[circle,draw,inner sep=0pt, text width=15pt, align=center] (x10) at (300:3) {$x_{10}$};
\node[circle,draw,inner sep=0pt, text width=15pt, align=center] (x11) at (330:3) {$x_{11}$};

%Edges from~$X$ to~$Y$
%X: x0,x1,x4,x5,x8,x9
%Y: x2,x3,x6,x7,x10,x11
\foreach \from/\to in {x1/x2,x3/x4,x5/x6,x7/x8,x9/x10,x11/x0}
    \draw (\from) -- (\to);

\foreach \from in {x0,x1}
    \foreach \to in {x4,x5,x8,x9}
        \draw (\from) -- (\to);

\foreach \from in {x4,x5}
    \foreach \to in {x0,x1,x8,x9}
        \draw (\from) -- (\to);

\foreach \from in {x8,x9}
    \foreach \to in {x0,x1,x4,x5}
        \draw (\from) -- (\to);

\foreach \from in {x2,x3}
    \foreach \to in {x6,x7,x10,x11}
        \draw (\from) -- (\to);

\foreach \from in {x6,x7}
    \foreach \to in {x2,x3,x10,x11}
        \draw (\from) -- (\to);

\foreach \from in {x10,x11}
    \foreach \to in {x2,x3,x6,x7}
        \draw (\from) -- (\to);

%\foreach \from/\to in {x0/x1,x2/x3,x4/x5,x6/x7,x8/x9,x10/x11}
%   \draw (\from) -- (\to);
%
%\foreach \to in {x2,x3,x6,x7,x10}
%   \draw (x0) -- (\to);
%
%\foreach \to in {x3,x6,x7,x10,x11}
%   \draw (x1) -- (\to);
%
%\foreach \to in {x2,x6,x7,x10,x11}
%   \draw (x4) -- (\to);
%
%\foreach \to in {x2,x3,x7,x10,x11}
%   \draw (x5) -- (\to);
%
%\foreach \to in {x2,x3,x6,x10,x11}
%   \draw (x8) -- (\to);
%
%\foreach \to in {x2,x3,x6,x7,x11}
%   \draw (x9) -- (\to);
\end{tikzpicture}
\end{center}
\caption{The Graph~$G_{4n}$ from Theorem~\ref{thm:gem-P1+2P_2} when $n=3$.}
\label{fig:thm:gem-P1+2P_2}
\end{figure}

Suppose, for contradiction that~$G_{4n}$ contains an induced $P_1+\nobreak 2P_2$.
Without loss of generality, the set~$X$ must contain three of the vertices $v_1,v_2,v_3$ of the $P_1+\nobreak 2P_2$.
Since every component of $P_1+\nobreak 2P_2$ contains at most two vertices, without loss of generality we may assume that~$v_1$ is non-adjacent to both~$v_2$ and~$v_3$.
However, every vertex of~$G_{4n}[X]$ has exactly one non-neighbour in~$X$.
This contradiction shows that~$G_{4n}$ is indeed $(P_1+\nobreak 2P_2)$-free.

Every vertex in~$X$ has exactly one neighbour in~$Y$ and vice versa.
This means that any~$K_3$ in~$G_{4n}$ must lie entirely in~$G_{4n}[X]$ or~$G_{4n}[Y]$.
Since~$G_{4n}[X]$ or~$G_{4n}[Y]$ are both complements of perfect matchings and every vertex of~$\overline{P_1+P_4}$ lies in one of three induced~$K_3$'s, which are pairwise non-disjoint, it follows that~$G_{4n}$ is $\overline{P_1+P_4}$-free.

It remains to show that the graphs~$G_{4n}$ form an infinite antichain with respect to the induced subgraph relation.
Since $n \geq 3$, every vertex in~$X$ (resp.~$Y$) has at least two neighbours in~$X$ (resp.~$Y$) that are pairwise adjacent.
Therefore, given~$x_1$, we can determine which vertices lie in~$X$ and which lie in~$Y$.
Every vertex in~$X$ (resp.~$Y$) has a unique neighbour in~$Y$ (resp.~$X$) and a unique non-neighbour in~$X$ (resp.~$Y$).
Therefore, by specifying which vertex in~$G_{4n}$ is~$x_1$, we uniquely determine $x_2,\ldots,x_{4n}$.
Suppose~$G_{4n}$ an induced subgraph of~$G_{4m}$ for some $m \geq 3$.
Then $n \leq m$ due to the number of vertices.
By symmetry, we may assume that the induced copy of~$G_{4n}$ in~$G_{4m}$ has vertex~$x_1$ of~$G_{4n}$ in the position of vertex~$x_1$ in~$G_{4m}$.
Then the induced copy of~$G_{4n}$ must have vertices $x_2,\ldots,x_{4n}$ in the same position as $x_2,\ldots,x_{4n}$ in~$G_{4m}$, respectively.
Now~$x_1$ and~$x_{4n}$ are non-adjacent in~$G_{4n}$.
If $n < m$ then~$x_1$ and~$x_{4n}$ are adjacent in~$G_{4m}$, a contradiction.
We conclude that if~$G_{4n}$ is an induced subgraph of~$G_{4m}$ then $n=m$.
In other words $\{G_{4n}\; |\; n \geq 3\}$ is an infinite antichain with respect to the induced subgraph relation.
\qed
\end{proof}

\section{State of the Art and Future Work}\label{s-state}

\begin{sloppypar}
In this section we summarise what is currently known about well-quasi-orderability and boundedness of clique-width, taking in to account the results proved in this paper.
We also give a number of directions for future work.
\end{sloppypar}

\medskip
Given four graphs $H_1,H_2,H_3,H_4$, the class of $(H_1,H_2)$-free graphs and the class of $(H_3,H_4)$-free graphs are {\em equivalent} if the unordered pair $H_3,H_4$ can be obtained from the unordered pair $H_1,H_2$ by some combination of the operations (i) complementing both graphs in the pair and (ii) if one of the graphs in the pair is~$K_3$, replacing it with $\overline{P_1+P_3}$ or vice versa.
If two classes are equivalent, then one of them is well-quasi-ordered with respect to the induced subgraph relation if and only if the other one is~\cite{KL11}.
Similarly, if two classes are equivalent, then one of them has bounded clique-width if and only if the other one does~\cite{DP15}.
We use this terminology in the remainder of this section.

\subsection{Well-Quasi-Ordering}
Atminas and Lozin~\cite{AL15} proved that the class of $(K_3,P_6)$-free graphs is well-quasi-ordered by the induced subgraph relation, while the class of $(\overline{2P_1+P_2},P_6)$-free graphs is not.
Updating the classification in~\cite{KL11} with these two results and the three results proved in this paper (Theorems~\ref{thm:diamond-p2+p3-wqo}--\ref{thm:gem-P1+2P_2})
leads to the following classification:
\begin{theorem}\label{t-wqowqo}
Let~${\cal G}$ be a class of graphs defined by two forbidden induced subgraphs. Then:
\begin{enumerate}
\item ${\cal G}$ is well-quasi-ordered with respect to the {\em labelled} induced subgraph relation if it is equivalent
to a class of $(H_1,H_2)$-free graphs such that one of the following holds:
\begin{enumerate}[(i)]
\item $H_1$ or $H_2 \ssi P_4$;
\item $H_1=sP_1$ and $H_2=K_t$ for some $s,t$;
\item $H_1 \ssi \overline{3P_1}$ and $H_2 \ssi 2P_1+P_3$ or~$P_6$;
\item $H_1 \ssi \overline{2P_1+P_2}$ and $H_2 \ssi P_2+P_3$ or~$P_5$.
\end{enumerate}
\item ${\cal G}$ is not well-quasi-ordered with respect to the induced subgraph relation if it is equivalent to a class of $(H_1,H_2)$-free graphs such that one of the following holds: 
\begin{enumerate}[(i)]
\item neither~$H_1$ nor~$H_2$ is a linear forest (disjoint union of paths);
\item $H_1 \si \overline{3P_1}$ and $H_2 \si 3P_1+P_2, 3P_2$ or~$2P_3$;
\item $H_1 \si \overline{2P_2}$ and $H_2 \si 4P_1$ or~$2P_2$;
\item $H_1 \si \overline{2P_1+P_2}$ and $H_2 \si 4P_1, P_2+P_4$ or~$P_6$;
\item $H_1 \si \overline{P_1+P_4}$ and $H_2 \si P_1+2P_2$.
\end{enumerate}
\end{enumerate}
\end{theorem}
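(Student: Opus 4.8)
The statement is a catalogue of sufficient conditions, so the plan is to verify each condition in turn, in every case either placing the class inside a class already known to be well-quasi-ordered by the labelled induced subgraph relation (for Part~1) or exhibiting an infinite antichain lying in it (for Part~2). Two observations are used throughout. First, by the closure under equivalence recorded at the start of this section~\cite{KL11}, it suffices to treat one representative $(H_1,H_2)$ of each equivalence class; for the labelled refinement needed in Part~1, the complementation half of an equivalence is covered by part~(i) of Lemma~\ref{lem:lwqo-operations}. Second, for any graphs $F\ssi F'$ the class of $F$-free graphs is a hereditary subclass of the class of $F'$-free graphs, and a hereditary subclass of a labelled-wqo class is itself labelled-wqo, while a class of graphs of bounded order is trivially labelled-wqo.

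For Part~1 each condition forces the class inside a class already known to be labelled-wqo. Condition~(i) places it inside the class of $P_4$-free graphs; the prime $P_4$-free graphs are exactly $K_1$, $2P_1$ and $P_2$, so that class is labelled-wqo by Lemma~\ref{lem:prime}. Condition~(ii) bounds both the independence number and the clique number, so the class is finite by Ramsey's theorem. Condition~(iii) (recall $\overline{3P_1}=K_3$) places the class inside $(K_3,2P_1+P_3)$-free or $(K_3,P_6)$-free graphs, labelled-wqo by~\cite{KL11} and~\cite{AL15}, respectively. Condition~(iv) places the class inside $(\overline{2P_1+P_2},P_2+P_3)$-free or $(\overline{2P_1+P_2},P_5)$-free graphs, labelled-wqo by Theorem~\ref{thm:diamond-p2+p3-wqo} and~\cite{KL11}, respectively.

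For Part~2 I would, for each condition, produce an infinite antichain with respect to the induced subgraph relation lying in the \emph{smallest} class of that condition; since an antichain of $F$-free graphs is automatically $H$-free whenever $F\ssi H$, the relation $\si$ in each item transports this one antichain to every class the item requires. Condition~(i) is handled by the antichain of long cycles $\{C_n:n\ge n_0\}$: if neither $H_1$ nor $H_2$ is a linear forest then neither embeds into a sufficiently long cycle, so for $n_0$ large this is $(H_1,H_2)$-free and, being an antichain, witnesses the failure of well-quasi-ordering. For the remaining conditions the antichains are collected from~\cite{KL11} (those with $H_2\si 3P_1+P_2,3P_2,2P_3$ in~(ii); $H_2\si 4P_1,2P_2$ in~(iii); $H_2\si 4P_1$ in~(iv)), together with Theorem~\ref{thm:diamond-P2+P_4}, which provides one antichain that is at once $\overline{2P_1+P_2}$-free, $(P_2+P_4)$-free and $P_6$-free and so settles the parts of~(iv) with $H_2\si P_2+P_4$ or $H_2\si P_6$ (recovering in passing the $(\overline{2P_1+P_2},P_6)$-free result of~\cite{AL15}), and Theorem~\ref{thm:gem-P1+2P_2}, which provides a $\overline{P_1+P_4}$-free, $(P_1+2P_2)$-free antichain settling~(v).

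Since the real content lies in the external inputs (\cite{AL15} and the antichains of~\cite{KL11}) and in Theorems~\ref{thm:diamond-p2+p3-wqo}, \ref{thm:diamond-P2+P_4} and~\ref{thm:gem-P1+2P_2}, which we may assume here, there is no single hard step; the task is purely one of organisation. The one point demanding care is the direction of the induced-subgraph relation: in Part~1 the forbidden graphs must be induced \emph{sub}graphs of the named graphs, so each class contracts into a class known to be labelled-wqo, whereas in Part~2 they must be induced \emph{super}graphs, so the named antichain survives; one must check that all ten items are stated consistently with these monotonicity requirements and that each constituent class genuinely avoids the graph underlying its antichain.
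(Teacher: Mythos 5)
Your proposal is correct and follows essentially the same route as the paper: the theorem is obtained by updating the classification of Korpelainen and Lozin~\cite{KL11} with the results of Atminas and Lozin~\cite{AL15} and with Theorems~\ref{thm:diamond-p2+p3-wqo}--\ref{thm:gem-P1+2P_2}, exactly the assembly you describe. The extra details you supply (Ramsey for 1(ii), prime cographs via Lemma~\ref{lem:prime} for 1(i), long cycles for 2(i), and the monotonicity of $\ssi$ versus $\si$) are routine parts of the cited classification and do not change the argument.
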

Note that in Theorem~\ref{t-wqowqo} every class that is well-quasi-ordered with respect to the induced subgraph relation is also well-quasi-ordered with respect to the {\em labelled} induced subgraph relation (see~\cite{AL15,KL11} and Theorem~\ref{thm:diamond-p2+p3-wqo}). 
This agrees with a conjecture of Atminas and Lozin~\cite{AL15} stating that these concepts coincide for hereditary classes~$X$ precisely when~$X$ is defined by a finite collection of forbidden induced subgraphs.
Theorem~\ref{t-wqowqo} leaves us with nine open cases.

\begin{oproblem}\label{o-wqo}
Is the class of $(H_1,H_2)$-free graphs well-quasi-ordered 
by the induced subgraph relation 
when:
\begin{enumerate}[(i)]
\item $H_1=\overline{3P_1}$ and $H_2 \in \{P_1+2P_2, P_1+P_5, P_2+P_4\}$;
\item $H_1=\overline{2P_1+P_2}$ and $H_2 \in \{P_1+2P_2, P_1+P_4\}$;
\item $H_1=\overline{P_1+P_4}$ and $H_2 \in \{P_1+P_4, 2P_2, P_2+P_3, P_5\}$.
\end{enumerate}
\end{oproblem}

In relation to Open Problem~\ref{o-wqo}, we mention that the infinite antichain for $(\overline{P_1+P_4},P_1+\nobreak 2P_2)$-free graphs
was initially found by a computer search. This computer search also showed that similar antichains do not exist for any of the remaining nine open cases. As such, constructing antichains for these cases is likely to be a challenging problem and this suggests that many of these classes may in fact be well-quasi-ordered.
Some of these remaining classes have been shown to have bounded clique-width~\cite{BK05,BLM04b,BLM04,DDP15}. We believe that some of the structural characterizations for proving these results may be useful for showing well-quasi-orderability.
Indeed, we are currently trying to prove that the classes of $(K_3,P_1+\nobreak P_5)$-free graphs and $(K_3,P_2+\nobreak P_4)$-free graphs are well-quasi-ordered via 
a special type of graph partition that avoids rainbow triangles and rainbow anti-triangles.

\subsection{Clique-Width}

The following theorem from~\cite{DDP15} describes exactly for which pairs $(H_1,H_2)$ the (un)boundedness of the clique-width of $(H_1,H_2)$-free
graphs has been determined.

\begin{theorem}\label{thm:classification2}
Let~${\cal G}$ be a class of graphs defined by two forbidden induced subgraphs. Then:
\begin{enumerate}
\item ${\cal G}$ has bounded clique-width if it is equivalent
to a class of $(H_1,H_2)$-free graphs such that one of the following holds:
\begin{enumerate}[(i)]
\item \label{thm:classification2:bdd:P4} $H_1$ or $H_2 \ssi P_4$;
\item \label{thm:classification2:bdd:ramsey} $H_1=sP_1$ and $H_2=K_t$ for some $s,t$;
\item \label{thm:classification2:bdd:P_1+P_3} $H_1 \ssi P_1+\nobreak P_3$ and $\overline{H_2} \ssi K_{1,3}+\nobreak 3P_1,\; K_{1,3}+\nobreak P_2,\;\allowbreak P_1+\nobreak P_2+\nobreak P_3,\;\allowbreak P_1+\nobreak P_5,\;\allowbreak P_1+\nobreak S_{1,1,2},\;\allowbreak P_6,\; \allowbreak S_{1,1,3}$ or~$S_{1,2,2}$;
\item \label{thm:classification2:bdd:2P_1+P_2} $H_1 \ssi 2P_1+\nobreak P_2$ and $\overline{H_2}\ssi P_1+\nobreak 2P_2,\; 2P_1+\nobreak P_3,\; 3P_1+\nobreak P_2$ or~$P_2+\nobreak P_3$;
\item \label{thm:classification2:bdd:P_1+P_4} $H_1 \subseteq_i P_1+\nobreak P_4$ and $\overline{H_2} \ssi P_1+\nobreak P_4$ or~$P_5$;
\item \label{thm:classification2:bdd:4P_1} $H_1 \subseteq_i 4P_1$ and $\overline{H_2} \ssi 2P_1+\nobreak P_3$;
\item \label{thm:classification2:bdd:K_13} $H_1,\overline{H_2} \ssi K_{1,3}$.
\end{enumerate}
\item ${\cal G}$ has unbounded clique-width if it is equivalent to a class of $(H_1,H_2)$-free graphs such that one of the following holds:
\begin{enumerate}[(i)]
\item \label{thm:classification2:unbdd:not-in-S} $H_1\not\in {\cal S}$ and $H_2 \not \in {\cal S}$;
\item \label{thm:classification2:unbdd:not-in-co-S} $\overline{H_1}\notin {\cal S}$ and $\overline{H_2} \not \in {\cal S}$;
\item \label{thm:classification2:unbdd:K_13or2P_2} $H_1 \si K_{1,3}$ or~$2P_2$ and $\overline{H_2} \si 4P_1$ or~$2P_2$;
\item \label{thm:classification2:unbdd:2P_1+P_2} $H_1 \si 2P_1+\nobreak P_2$ and $\overline{H_2} \si K_{1,3},\; 5P_1,\; P_2+\nobreak P_4$ or~$P_6$;
\item \label{thm:classification2:unbdd:3P_1} $H_1 \si 3P_1$ and $\overline{H_2} \si 2P_1+\nobreak 2P_2,\; 2P_1+\nobreak P_4,\; 4P_1+\nobreak P_2,\; 3P_2$ or~$2P_3$;
\item \label{thm:classification2:unbdd:4P_1} $H_1 \si 4P_1$ and $\overline{H_2} \si P_1 +\nobreak P_4$ or~$3P_1+\nobreak P_2$.
\end{enumerate}
\end{enumerate}
\end{theorem}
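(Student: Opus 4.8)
The plan is to treat this as a compilation theorem: it records, for each equivalence class of pairs $(H_1,H_2)$, whether boundedness or unboundedness of clique-width has been settled in the literature, so the ``proof'' is really a case check — verify that every family listed in Part~1 is covered by an existing bounded-clique-width result (or a short direct argument) and every family in Part~2 by an existing unboundedness construction, and then confirm by inspection that the two lists are mutually consistent under the equivalence operations of Section~\ref{s-state}.

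For the \emph{bounded} side I would dispatch the two trivial families first. If $H_1\ssi P_4$ then the class is contained in the class of cographs, which have clique-width at most~$2$; if $H_1=sP_1$ and $H_2=K_t$ then by Ramsey's theorem the class consists of graphs on a bounded number of vertices, so clique-width is trivially bounded. The substantive families are (iii)–(vii), and each rests on its own structural analysis: given a graph $G$ in the class, one deletes a bounded number of vertices and performs a bounded number of subgraph and bipartite complementations — operations known to change clique-width by only a bounded additive amount — and then shows that what remains decomposes, via modular decomposition, split decomposition, or an explicit partition into few homogeneous parts, into pieces of clique-width bounded by inspection (cographs, graphs of bounded size, or bipartite graphs of bounded ``linear'' structure). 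These are precisely the arguments of~\cite{BK05,BLM04b,BLM04,DHP0,DDP15} and the references therein, and I would cite them rather than reproduce them.

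For the \emph{unbounded} side, the families $H_1,H_2\notin{\cal S}$ and $\overline{H_1},\overline{H_2}\notin{\cal S}$ are the classical ``easy'' cases: it is known that if no forbidden induced subgraph lies in~${\cal S}$ then the class already has unbounded clique-width (dually for complements), since it then contains a family of graphs of unbounded clique-width — built from walls or grids, or their complements — that avoids every graph outside~${\cal S}$. The remaining families (iii)–(vi) are each handled by exhibiting an explicit subclass of $(H_1,H_2)$-free graphs already known to have unbounded clique-width, typically split graphs obtained from grids or walls by local complementations as constructed in~\cite{DHP0}, and checking that this subclass contains neither~$H_1$ nor~$H_2$.

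The main obstacle is not any single step but the bulk and heterogeneity of the bounded-side families (iii)–(vii): there is no uniform argument, each one depends on an ad hoc structural characterization scattered across several papers, and merely confirming that the stated conditions on $\overline{H_2}$ are exactly those for which such a characterization is currently available — and that the unbounded list is genuinely complementary up to the equivalence operations — is a delicate bookkeeping exercise over all pairs of graphs drawn from ${\cal S}\cup\overline{{\cal S}}$.
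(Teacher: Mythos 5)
Your proposal is essentially the paper's own treatment: the paper does not prove this theorem at all, but quotes it verbatim as a known classification from the survey~\cite{DDP15}, which is exactly the case-by-case compilation of literature results (plus the trivial cograph and Ramsey observations) that you describe. So your approach is correct and coincides with the paper's, the only difference being that you sketch the bookkeeping explicitly rather than citing the survey wholesale.
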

This leaves us with the following  eight non-equivalent open cases.

\begin{oproblem}\label{oprob:twographs}
Does the class of $(H_1,H_2)$-free graphs have bounded or unbounded clique-width when:
\begin{enumerate}[(i)]
\item \label{oprob:twographs:3P_1} $H_1=3P_1$ and $\overline{H_2} \in \{P_1+\nobreak S_{1,1,3},\allowbreak P_2+\nobreak P_4,\allowbreak S_{1,2,3}\}$;
\item\label{oprob:twographs:2P_1+P_2} $H_1=2P_1+\nobreak P_2$ and $\overline{H_2} \in \{P_1+\nobreak P_2+\nobreak P_3,\allowbreak P_1+\nobreak P_5\}$;
\item \label{oprob:twographs:P_1+P_4} $H_1=P_1+\nobreak P_4$ and $\overline{H_2} \in \{P_1+\nobreak 2P_2,\allowbreak P_2+\nobreak P_3\}$ or
\item \label{oprob:twographs:2P_1+P_3} $H_1=\overline{H_2}=2P_1+\nobreak P_3$.
\end{enumerate}
\end{oproblem}

A potential direction for future research related to determining boundedness of clique-width is investigating linear clique-width for classes defined by two forbidden induced subgraphs.
Indeed, it is not hard to show that $k$-uniform graphs have bounded linear clique-width.
Again, we can use complementations and vertex deletions when dealing with this parameter.

\subsection{Well-Quasi-Ordering versus Clique-Width}

We recall that all known results for bigenic classes of graphs verify Conjecture~\ref{c-f}. Note that Conjecture~\ref{c-f} is verified directly if a graph class has bounded clique-width.
Brandst{\"a}dt, Le and Mosca~\cite{BLM04b} proved that the class of $(\overline{P_1+P_4},P_1+\nobreak P_4)$-free graphs (and thus the class of $(\overline{2P_1+P_2},P_1+\nobreak P_4)$-free graphs) has bounded clique-width.
Using results from~\cite{BK05}, the same authors proved in~\cite{BLM04} that the class of $(\overline{P_1+P_4},P_5)$-free graphs (and thus the class of $(\overline{2P_2},P_1+\nobreak P_4)$-free graphs)
has bounded clique-width.
Dabrowski, Dross and Paulusma~\cite{DDP15} showed that the classes of $(K_3,P_1+\nobreak 2P_2)$-free graphs, $(K_3,P_1+\nobreak P_5)$-free graphs and 
$(\overline{2P_1+P_2},P_1+\nobreak 2P_2)$-free graphs all have bounded clique-width. 
Hence, there are only two (non-equivalent) bigenic classes of graphs left for which Conjecture~\ref{c-f} needs to be verified.
We state these two open cases below.

\begin{oproblem}\label{o-con}
Is Conjecture~\ref{c-f} true for the class of $(H_1,H_2)$-free graphs when:
\begin{enumerate}[(i)]
\item $H_1=K_3$ and $H_2=P_2+\nobreak P_4$ or when
\item $H_1=\overline{P_1+P_4}$ and $H_2=P_2+\nobreak P_3$?
\end{enumerate}
\end{oproblem}

\noindent
As can be seen from Open Problems~\ref{o-wqo} and~\ref{oprob:twographs}, for both classes we know neither whether they are well-quasi-ordered by the induced subgraph relation nor whether their clique-width is bounded.

\bibliographystyle{abbrv}
\bibliography{mybib}

\newpage
\appendix

\section{Proofs of the Claims in Lemma~\ref{lem:diamond-P_2+P_3-free-C5-non-free} from~\cite[Lemma~10]{DHP0}}\label{app:c5}
\setcounter{ctrclaim}{0}
\clm{\em $G[V_i \cup X]$ is a matching.}
Indeed, if some vertex~$x$ in~$V_i$
(respectively~$X$) is adjacent to two vertices $y_1,y_2$ in~$X$
(respectively~$V_i$), then $G[v_{i+2},v_{i+3},y_1,x,y_2]$ is a $P_2+P_3$.

\medskip
\clm{\em If~$V_i$ and~$V_j$ are opposite, then $G[V_i\cup V_j]$ is a matching.}
Suppose for contradiction that $x \in V_1$ is adjacent to two vertices $y,y'
\in V_3$. Then $G[v_2,x,y,y']$ would be a $\overline{2P_1+P_2}$, a
contradiction.

\medskip
\clm{\em If~$V_i$ and~$V_j$ are consecutive, then $G[V_i \cup V_j]$ is a
co-matching.} Suppose for contradiction that $x \in V_1$ is non-adjacent to two
vertices $y,y' \in V_2$. Then $G[x,v_5,y,v_3,y']$ is a $P_2+P_3$, a
contradiction.

\medskip
\clm{\em If~$V_i$ is large, then~$X$ is anti-complete to $V_{i-2} \cup
V_{i+2}$.} Suppose for contradiction that~$V_3$ is large and $x \in X$ has a
neighbour $y \in V_1$. Then since~$V_3$ is large and both $G[X \cup V_3]$ and
$G[V_1 \cup V_3]$ are matchings, there must be a vertex $z \in V_3$ that is
non-adjacent to both~$x$ and~$y$. Then $G[x,y,v_3,v_4,z]$ is a $P_2+P_3$, a
contradiction.

\medskip
\clm{\em If~$V_i$ is large, then~$V_{i-1}$ is anti-complete to~$V_{i+1}$.}
Suppose for contradiction that~$V_2$ is large and $x \in V_1$ has a neighbour
$y \in V_3$. Since~$V_2$ is large and each vertex in $V_1 \cup V_3$ has at most
one non-neighbour in~$V_2$, there must be a vertex $z \in V_2$ that is adjacent
to both~$x$ and~$y$. Now $G[x,y,v_2,z]$ is a $\overline{2P_1+P_2}$, a
contradiction.

\medskip
\clm{\em If $V_{i-1},V_{i},V_{i+1}$ are large, then~$V_i$ is complete to
$V_{i-1} \cup V_{i+1}$.} Suppose for contradiction that $V_1,V_2,V_3$ are large
and some vertex $x \in V_1$ is non-adjacent to a vertex $y \in V_2$.
Since~$V_3$ is large and $G[V_2 \cup V_3]$ is a co-matching, there must be two
vertices $z,z' \in V_3$, adjacent to~$y$. By the previous claim, since~$V_2$ is
large, $z,z'$ must be non-adjacent to~$x$. Therefore $G[x,v_5,z,y,z']$ is a
$P_2+\nobreak P_3$, which is a contradiction.

\section{Proofs of the Claims in Lemma~\ref{lem:diamond-P_2+P_3-free-C4-non-free} from~\cite[Lemma~11]{DHP0}}\label{app:c4-1}

\setcounter{ctrclaim}{0}
\clm{\em $V_i$ is independent for $i\in \{1,2\}$.}
If $x,y \in V_i$ were adjacent then $G[x,y,v_{i+1},v_{i+3}]$ would be a $\overline{2P_1+P_2}$.

\medskip
\clm{\em $W_i$ is independent for $i\in \{1,2,3,4\}$.}
If $x,y \in W_i$ were adjacent then $G[x,y,v_{i+1},v_{i+2},v_{i+3}]$ would be a $P_2+\nobreak P_3$.

\medskip
\clm{\em $X$ is independent.}
If $x,y \in X$ were adjacent then $G[x,y,v_1,v_2,v_3]$ would be a $P_2+\nobreak P_3$.

\medskip
\clm{\em $W_i$ is anti-complete to~$X$ for $i\in \{1,2,3,4\}$.}
If $x \in X$ were adjacent to $y \in W_i$ then $G[x,y,v_{i+1},v_{i+2},v_{i+3}]$ would be a $P_2+\nobreak P_3$.

\medskip
\clm{\em For $i \in \{1,2\}$ either~$W_i$ or~$W_{i+2}$ is empty. Therefore, we may assume by symmetry that $W_3=\emptyset$ and $W_4=\emptyset$.}
To show this, first suppose that vertices $x\in W_1$ and $y\in W_3$ are adjacent. 
Then $G[v_1,v_2,v_3,y,x]$ is a~$C_5$, which is a contradiction.
Therefore,  $W_1$ is anti-complete to~$W_3$.
If both~$W_1$ and~$W_3$ are non-empty then by our earlier assumption they must each contain at least two vertices.
Suppose that $x\in W_1$ and $y,z\in W_3$.
In this case $G[x,v_1,y,v_3,z]$ is a $P_2+\nobreak P_3$, a contradiction.
We conclude that at least one of~$W_1$ and~$W_3$ must be empty.
Without loss of generality, we assume that~$W_3$ is empty. Similarly, we assume that~$W_4$ is empty. 
\end{document}